\theoremstyle{plain}
\newtheorem{lem}{Lemma}[section]
\newtheorem*{case}{Theorem 0}
\newtheorem*{main}{Theorem 1}
\newtheorem*{LA}{Lemma A}
\newtheorem*{LB}{Lemma B}
\newtheorem{theo}[lem]{Theorem}
\newtheorem{coro}[lem]{Corollary}
\theoremstyle{definition}
\newtheorem{definition}[lem]{Definition}
\newtheorem{rem}[lem]{Remark}
\renewcommand{\descriptionlabel}[1]%
       {\hspace{\labelsep}\textsf{#1}}
\newcommand{\R}{\mathbb{R}}
\newcommand{\noi} {\noindent}
\begin{document}
\title{Cubulated moves for 2-knots.\thanks{{\it 2010 Mathematics Subject Classification.} 
Primary: 57M25. Secondary: 57M27, 57Q45.
{\it Key Words:} Cubulated moves, Discrete knots, 2-knots.}}
\author{ Juan Pablo D\'iaz\thanks{This work was partially supported by CONACyT (M\'exico), FORDECYT 265667},
 Gabriela Hinojosa\thanks{This work was partially supported by CONACyT (M\'exico), CB-2009-129939.}, Alberto Verjosvky, \thanks{This work was partially supported by
CONACyT (M\'exico), CB-2009-129280 and  PAPIIT (Universidad
Nacional Aut\'onoma de M\'exico) \#IN 106817.}}
\date{August 21, 2017}

\maketitle

\begin{abstract} \noi In this paper, we prove that given two cubical links of dimension two in $\mathbb{R}^4$,
they are isotopic if and only if one can pass from one to the other
by a finite sequence of cubulated moves. These moves are analogous
to the Reidemeister and Roseman moves for classical tame knots of dimension one and two, respectively. 
\end{abstract}


\section{Introduction}

In \cite{BHV} it was shown that any smooth knot ${K}^n:{\mathbb S}^n\hookrightarrow{\mathbb R}^{n+2}$ can be deformed isotopically into the $n$-skeleton  
of the canonical cubulation of  ${\mathbb R}^{n+2}$ and this isotopic copy is called \emph{cubical $n$-knot}. In particular, every smooth 1-knot 
$\mathbb S^1\subset{\mathbb R}^3$ is isotopic to a cubical knot.   

\noindent There are two types of elementary ``cubulated moves''. The first one (M1) is obtained by dividing each cube of the original cubulation of $\mathbb{R}^3$ into $m^3$ cubes, 
which means that each edge of the knot is subdivided into $m$ equal segments. The second one (M2) consists in exchanging a connected set of edges in a face of the 
cubulation 
with the complementary edges in that face. If two cubical knots $K_1$ and $K_2$ are such that we can convert $K_1$ into $K_2$ using a finite sequence of cubulated moves then we say that they are \emph{equivalent via cubulated moves} and is denoted by  $K_1\overset{c}\sim K_2$.   

\noindent In \cite{HVV} it was  proved the following:

\begin{case}\label{case}
Given two cubical knots $K_1$ and $K_2$  in $\mathbb{R}^{3}$, $K_1\cong K_2 \cong \mathbb{S}^{1}$, 
they are isotopic if and only if $K_1$ is equivalent to $K_2$ by a finite sequence of cubulated moves; {\emph{i.e.}}, $K_1\sim  K_2 \iff K_1\overset{c}\sim K_2$.
\end{case}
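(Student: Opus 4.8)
The statement splits into two implications, and I would treat them separately. The implication $K_{1}\overset{c}{\sim}K_{2}\Rightarrow K_{1}\sim K_{2}$ is the easy one: it suffices to check that each elementary move is induced by an ambient isotopy of $\mathbb{R}^{3}$. A move of type M1 does not alter $K$ as a subset of $\mathbb{R}^{3}$ at all — it only subdivides the ambient cell structure — so it certainly preserves the isotopy class. For a move of type M2, in which a connected set of edges $\alpha$ in a square face $F$ is exchanged with the complementary edges $\beta=\partial F\setminus\alpha$, the hypothesis that makes the move legal ensures that a neighborhood of $F$ meets $K$ exactly in $\alpha$; since $\alpha\cup\beta=\partial F$ bounds the $2$-cell $F$, sliding $\alpha$ across $F$ onto $\beta$ is an ambient isotopy supported in that neighborhood. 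Composing such isotopies gives $K_{1}\sim K_{2}$.

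For the converse I would isolate one local statement and bootstrap from it. \emph{Push Lemma: if $K$ is a cubical knot, $D$ is a disk which is a subcomplex of the cubulation (a ``cubulated disk''), $\partial D=\alpha\cup\beta$ with $\alpha$ an edge-path contained in $K$ and $\beta$ an edge-path meeting $K$ only in its endpoints, and $D\cap K=\alpha$, then $K\overset{c}{\sim}(K\setminus\alpha)\cup\beta$.} To prove this I would shell $D$ starting from $\alpha$: order its square faces $F_{1},\dots,F_{m}$ so that each $F_{j}$ is attached to $\alpha\cup F_{1}\cup\dots\cup F_{j-1}$ along a connected union of one, two or three edges of $\partial F_{j}$ and the $F_{j}$ together sweep all of $D$. (Two-dimensional disks are shellable, and after an M1 refinement one can arrange the shelling to begin along the prescribed arc $\alpha$ and to avoid the degenerate case in which a square contributes all four of its edges at once.) Pushing the knot across $F_{1},F_{2},\dots,F_{m}$ in turn is then exactly a sequence of M2 moves — at the $j$-th step the current knot meets $\overline{F_{j}}$ in a connected set of at most three edges of $\partial F_{j}$, which is precisely the legality condition for M2 — and the end result is $(K\setminus\alpha)\cup\beta$.

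Granting the Push Lemma I would finish as follows. A PL ambient isotopy carrying $K_{1}$ to $K_{2}$ can be written as a finite composition of ambient isotopies each supported in a ball of arbitrarily small diameter: one invokes the classical presentation of PL equivalence of knots in $\mathbb{R}^{3}$ by elementary triangle moves, first subdividing the edges of the knot so that every triangle involved is small and meets the knot in a single chord. Between consecutive stages of this sequence I would insert cubical approximations (they exist by \cite{BHV}, applied locally), chosen to coincide with the ambient knot wherever it is already cubical, so that two consecutive cubical knots agree outside one small ball and inside it are short unknotted arcs cobounding a thin disk close to the corresponding triangle. After a further M1 refinement that makes that ball, and that disk, part of the cubulation, each step becomes an instance of the Push Lemma, and chaining the resulting $\overset{c}{\sim}$-equivalences yields $K_{1}\overset{c}{\sim}K_{2}$.

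The main obstacle I anticipate is the piece of PL technology hidden in the last two steps: arranging, by M1 refinements and small isotopies rel boundary, that each guiding disk is a genuine subcomplex of a cubulation (not merely PL-isotopic to one) while its two boundary arcs remain the prescribed edge-paths, and producing a shelling of a cubulated disk whose first layer is a prescribed boundary arc so that every elementary push is a legal M2. The remaining ingredients — isotopy-invariance of the moves, the decomposition of an isotopy into small local ones, and the bookkeeping that the M1 refinements are harmless — are routine.
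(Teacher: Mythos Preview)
The paper does not prove Theorem~0 itself; it quotes it from \cite{HVV} and then proves the $2$-dimensional analogue (Theorem~1), explicitly stating that the latter proof generalizes the former. So the relevant comparison is between your outline and the $1$-dimensional specialization of the paper's argument.

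Your easy direction matches the paper's (Remark~\ref{equivalencia}). For the hard direction the strategies diverge. The paper/\cite{HVV} passes to one higher dimension: it smooths $K_1,K_2$, forms the isotopy cylinder $J^{2}\subset\mathbb{R}^{4}$, cubulates the whole cylinder at once (Theorem~\ref{trace}), and then reads off a finite chain of cubical knots as horizontal slices of $\widehat{J^{2}}$; consecutive slices bound a cubical membrane sitting inside $\widehat{J^{2}}$, and the analogue of Lemma~\ref{fns} turns each membrane into a sequence of (M2)-moves. You instead stay in $\mathbb{R}^{3}$, factor the isotopy into classical $\Delta$-moves, cubically approximate the intermediate knots, and try to connect neighbors by your Push Lemma across a cubulated disk.

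Your Push Lemma is fine, and you correctly locate the crux. But the sentence ``after a further M1 refinement that makes that ball, and that disk, part of the cubulation'' is where the argument breaks. An M1 move only subdivides the existing cubical cells; it never straightens a generic PL $2$-disk into a union of axis-parallel squares. What you actually need is: \emph{given two cubical arcs with common endpoints forming a small unknot, there is a cubulated disk in some $\mathcal{C}_m$ spanning that unknot and meeting the knot only along the prescribed arc}. That is not a refinement statement --- it is an existence statement about cubulated Seifert disks for cubical unknots, and it is essentially as hard as the local version of the theorem you are proving. The paper's higher-dimensional trick is precisely what manufactures these spanning membranes for free: once the entire isotopy cylinder is cubulated, each slab $p^{-1}[n,n+1]\cap\widehat{J^{2}}$ \emph{is} the cubical cobordism between consecutive slices, and one never has to produce a cubulated disk from scratch. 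Your route can likely be completed, but it needs a genuine lemma (e.g.\ an explicit construction of a cubulated spanning disk for a short cubical unknot in a box, via a monotone projection argument) in place of the appeal to M1.
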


\noindent Theorem 0 is analogous to the Reidemeister moves of classical tame knots for cubical knots.

\begin{figure}[h] 
 \begin{center}
 \includegraphics[height=4cm]{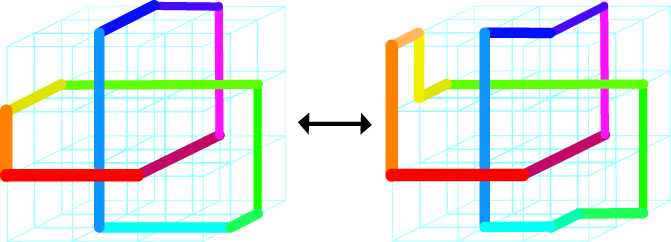}
\end{center}
\caption{\sl Two isotopic knots  are equivalent via cubulated moves.} 
\label{M1}
\end{figure} 

\noindent Notice that cubulated moves can be extended to cubical 2-knots  in a natural way: 
The first one (M1) is obtained by dividing each hypercube of the original cubulation of 
$\mathbb{R}^4$ into $m^4$ hypercubes, and the second one (M2) consists in exchanging a connected set of squared faces homeomorphic to a 
disk $\mathbb{D}^2$ in a cube of the cubulation (or a subdivision of the cubulation) with the complementary faces in that cube.  

\noindent The study of 2-knots in $\R^4$ has been considered by various authors, for instance in 
\cite{CRS}, \cite{CS}, \cite{K} and \cite{Roseman}.

\noindent Our goal is to extend the Theorem 0 for cubical knots of dimension two:

\begin{main}\label{main}
Given two cubical 2-knots $K^2_1$ and $K^2_2$  in $\mathbb{R}^{4}$,
 then they are isotopic if and only if $K^2_1$ is equivalent to $K^2_2$ by cubulated moves; {\emph{i.e.}}, $$K^2_1\sim  K^2_2 \iff K^2_1\overset{c}\sim  K^2_2.$$
\end{main}

\section{Preliminaries}

\subsection{Cubulations of $\mathbb{R}^{4}$}
\noi The regular hypercubic honeycomb whose Schl\"afli symbol is $\{4,3,3,4\}$, is called a {\it cubulation} of $\mathbb{R}^{4}$. In other words, a cubulation of  $\mathbb{R}^{4}$ is a decomposition into a collection of right-angled $4$-dimensional
hypercubes $\{4,3,3\}$ called  $\textit{cells}$ such that any two are
either disjoint or meet in one common $k-$face of some dimension $k$. This provides  $\mathbb{R}^{4}$ with the structure of a cubical
complex whose category is similar to the simplicial category PL.   

\noi The combinatorial structure of the regular Euclidean honeycomb $\{4,3,3,4\}$ is the following: around each vertex  there are 8 edges, 24 squares, 32 cubes and 16 hypercubes. Around each edge there are 6 squares, 12 cubes and 8 hypercubes. Around each square there are 4 cubes and 4 hypercubes. Finally around each cube there are  2 hypercubes.

\begin{figure}
\centering
\includegraphics[scale=0.3]{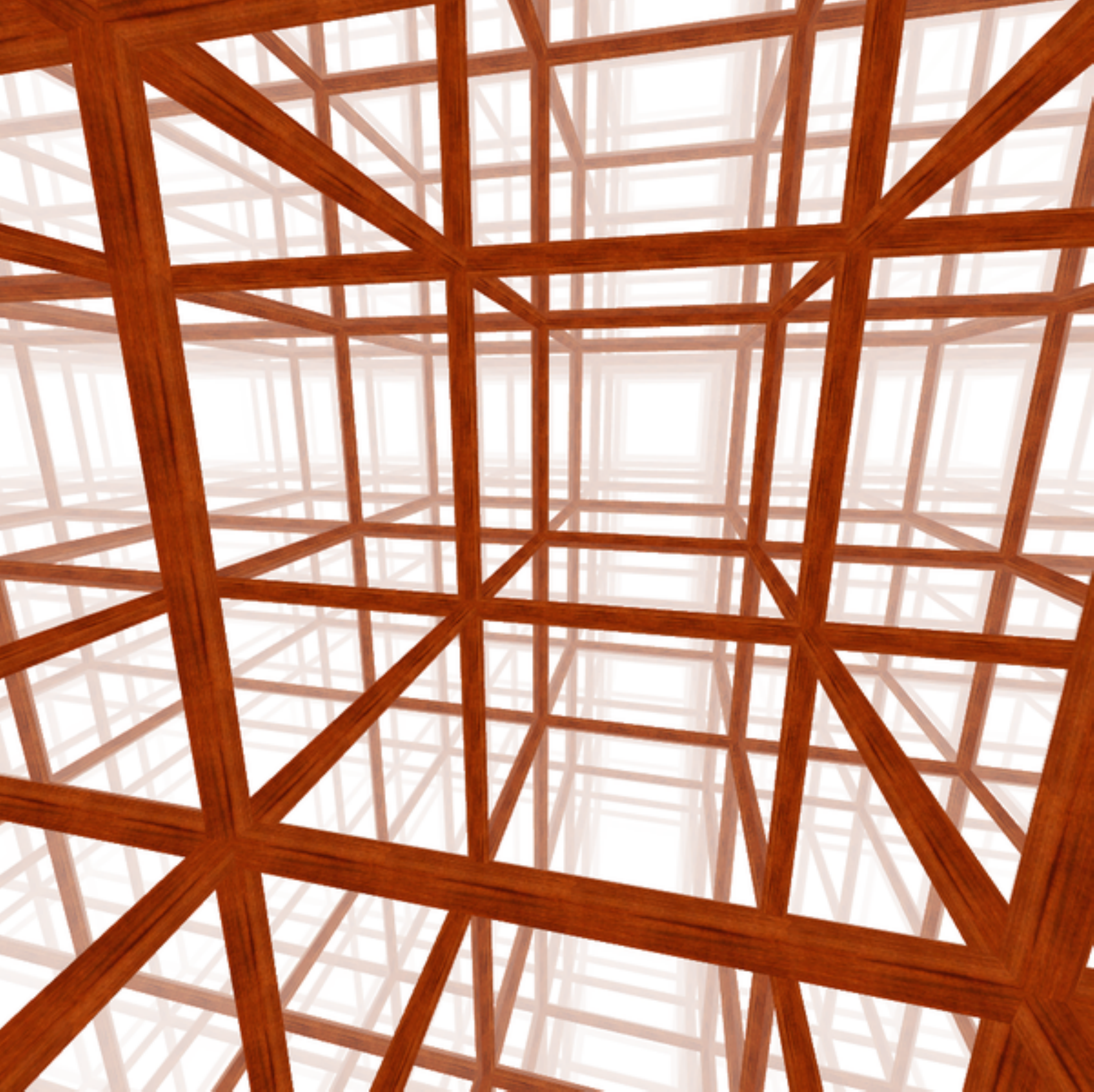}
\begin{center}
{{\bf Figure 1.} The 3-dimensional cubical kaleidoscopic honeycomb $\{4,3,4\}$. 
This figure is courtesy of Roice Nelson \cite{RN}.}
\end{center}
\end{figure}

\noindent The canonical hypercubic honeycomb $\cal C$ of $\mathbb{R}^{4}$ is its decomposition into hypercubes which are the images of the unit hypercube:
$$\{4,3,3\}=I^4=[0,1]^4=\{(x_{1},x_{2},x_{3},x_{4})\in \mathbb{R}^4 \,|\,0\leq x_{i}\leq 1\}$$ by translations by vectors with integer coefficients. Then all vertices of $\cal C$ have integer coordinates.   

\noindent Any regular hypercubic honeycomb $\{4,3,3,4\}$ or cubulation of $\mathbb{R}^{4}$ is obtained from the canonical cubulation by applying a conformal transformation  to the canonical cubulation. Remember that a conformal transformation is of the form  
$x\mapsto{\lambda{A(x)}+a},$ where $\lambda\neq0,\,\,a\in \mathbb{R}^{4},\,\,\,A\in{SO(4)}$. 

\begin{definition} The $k${\it-skeleton} of $\cal C$, denoted by $\mathcal{S}^k$, consists of the union of the $k$-skeletons of the hypercubes in  $\cal C$,
{\it i.e.,} the union of all cubes of dimension $k$ contained in the faces of the $4$-cubes in  $\cal C$.
We will call the 2-skeleton $\mathcal{S}^2$ of $\cal C$ the {\it canonical scaffolding} of $\mathbb{R}^{4}$.
\end{definition}

\noindent Notice that all the previous definitions can be extended  in a natural way to $\mathbb{R}^{n+2}$.

\subsection{Cubical and smooth 2-knots}

\noindent In classical knot theory, a subset $K$ of a space $X$ is a {\it knot} if $K$ is homeomorphic to a  $p$-dimensional sphere 
$\mathbb{S}^{p}$ embedded in either the Euclidean $n$-space $\mathbb{R}^{n}$ or the $n$-sphere $\mathbb{S}^{n}=\mathbb{R}^{n}\cup\{\infty\}$, where $p<n$. Two knots $K_1$, $K_2$ are {\it equivalent} or {\it isotopic} if there is a homeomorphism $h:X\hookrightarrow X$ such that $h(K_1)=K_2$;
in other words $(X,K_1)\cong (X,K_2)$. However, a knot $K$ is sometimes defined to be an embedding
$K:\mathbb{S}^{p}\hookrightarrow\mathbb{S}^{n}\cong\mathbb{R}^{n} \cup \{\infty \}$ (see \cite{mazur}, \cite{rolfsen}).
We shall also find this convenient at times and will use the same symbol to
denote either the map $K$ or its image $K(\mathbb{S}^{p})$ in $\mathbb{S}^{n}$.

\begin{definition}
Let $K^2$ be a $2$-dimensional knot in $\mathbb{R}^{4}$. If $K^2$ is contained in  $\mathcal{S}^2$, we say that $K^{2}$ is a \emph{cubical knot}. 
\end{definition}

\begin{definition} If $K^2$ is a smooth knot and $\widehat{K^2}$ denotes a cubical knot which is isotopic to $K^2$, {\it i.e.,} $K^2  \sim \widehat{K^2}$, we say that $\widehat{K^2}$ is a \emph{cubical diagram} of the knot $K^2$.
\end{definition}

\noi Given two parametrized $2$-dimensional smooth knots $K^2_1$, $K^2_2:\mathbb{S}^2\hookrightarrow\mathbb{R}^{4}$ we say they are \textit{smoothly isotopic} if there exists a 
smooth isotopy $H:\mathbb{S}^{2}\times\mathbb{R}\rightarrow\mathbb{R}^{4}$ such that

$$
H(x,t)= \left\{ \begin{array}{ll}
K^2_1(x) & \mbox{if $t\leq1$},\\
K^2_2(x) & \mbox{if $t\geq 2$},
\end{array} \right.
$$

and $H(\ \cdot \ ,t)$ is an embedding of $\mathbb{S}^2$ for all $t\in\mathbb{R}$.  

\begin{definition}\label{cylinder}
We will say 
$J^3=\{(H(x,t),t)\in\mathbb{R}^{5}\,|\,x\in\mathbb{S}^{2},\,\,t\in\mathbb{R}\}$ is the {\emph {isotopic cylinder}} of $K^2_1$ and $K^2_2$. 
\end{definition}

\noi Note that $J^3$ is a smooth noncompact submanifold of codimension two in $\mathbb{R}^{5}$.  

\begin{definition}
Let $p:\mathbb{R}^{5}\hookrightarrow\mathbb{R}$ be the projection onto the last coordinate. Let $M$ be a connected subset of $\mathbb{R}^{5}$ such that $p^{-1}(c)\cap M$ (or $p|_M^{-1}(c)$) is connected for all $c\in\mathbb{R}$,
we say that $M$  is {\emph{sliced by connected level sets of $p$}}. 
\end{definition}

\noindent Observe that there is no restriction on the dimension of $M$.

\noindent In  \cite{HVV} it was proved the following result.

\begin{theo}\label{trace}
The isotopic cylinder $J^3$ can be cubulated. In other words, there exists an isotopic copy $\widehat{J^3}$ of $J^3$ contained
in the $3$-skeleton of the canonical cubulation of $\mathbb{R}^{5}$. 
Moreover $\widehat{J^3}$ can be chosen to be sliced by connected level sets of $p$.
\end{theo}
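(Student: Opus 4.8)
\noindent\emph{Sketch of the argument.} The goal is to run the cubulation procedure of \cite{BHV}, extended to the noncompact ``fibered'' situation at hand. Put coordinates on $\mathbb{R}^5=\mathbb{R}^4\times\mathbb{R}$ so that the last coordinate is the isotopy parameter, i.e. $p(H(x,t),t)=t$, and record the three features of $J^3$ that control the argument: (i) $J^3$ is a smooth, properly embedded submanifold of \emph{codimension two}; (ii) $p|_{J^3}$ is a proper submersion whose fibers $p^{-1}(c)\cap J^3\cong\mathbb{S}^2$ are connected, so $J^3$ is already sliced by connected level sets; and (iii) $J^3$ is a product outside $p^{-1}([1,2])$, equal to $K^2_1\times(-\infty,1]$ for $t\le 1$ and to $K^2_2\times[2,\infty)$ for $t\ge 2$. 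After replacing $K^2_1,K^2_2$ by cubical knots (the case $n=2$ of \cite{BHV}) and reparametrizing $t$ so that the nontrivial part of the isotopy is spread thinly over a long interval, the two ends of $J^3$ already lie in the $3$-skeleton and the part of $J^3$ over each unit $t$-slab is $C^1$-close to a product.

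\noindent Next I would fix a sufficiently fine canonical cubulation $\mathcal{C}$ of $\mathbb{R}^5$ (obtained by repeated application of (M1)); it is automatically \emph{adapted to $p$}, in that $p$ maps every cell of $\mathcal{C}$ onto a point or onto an integer unit interval and each slice $p^{-1}(k)$ is a union of $4$-cells carrying the canonical cubulation of $\mathbb{R}^4$. A $C^\infty$-small, compactly supported perturbation --- legitimate by the product ends (iii) and by openness of the submersion condition --- makes $J^3$ transverse to every cell of $\mathcal{C}$ while keeping $p|_{J^3}$ a submersion; then $J^3$ misses the $1$-skeleton altogether and meets each $k$-cell in a $(k-2)$-manifold. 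Now I would push $J^3$ into the $3$-skeleton $\mathcal{S}^3$ by the codimension-two mechanism of \cite{BHV}, inductively over the skeleta: the local step is that inside a single $5$-cell a properly embedded $3$-manifold whose boundary already lies in the $3$-skeleton of the boundary of that cell can be isotoped, rel that boundary and by an isotopy supported in the cell, into the cell's $3$-skeleton --- this is exactly where codimension two is used. Performing this one cell at a time is what lets me control the slicing: each elementary push is supported in a $5$-cell $C$ whose image $p(C)$ is a single interval $[k,k+1]$, and it can be chosen to respect the product fibration $C\to[k,k+1]$, i.e. to be, up to a vertical adjustment, the product of an isotopy of the $4$-cube with the identity in the $t$-direction; such a move restricts on each level $p^{-1}(c)\cap C$ to a compactly supported isotopy of that level inside a $4$-cube, so it cannot disconnect a level that was connected. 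Starting from $J^3$, which is sliced by connected level sets, and applying finitely many such pushes, one reaches $\widehat{J^3}\subset\mathcal{S}^3$, isotopic to $J^3$, still sliced by connected level sets, and unchanged over the two product ends.

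\noindent The main obstacle is precisely this coordination: the raw push-to-skeleton operations of \cite{BHV} are free to merge or split the $1$- and $2$-dimensional pieces that $J^3$ cuts out of the lower cells, so the whole deformation must be carried out ``fibered over $p$''. Making this precise requires a relative, parametrized version of the argument of \cite{BHV} over each $t$-slab, and one must check that the transversality perturbation, the collaring that makes each slab-piece a near-product, and the cell-by-cell pushes can all be performed simultaneously in a $p$-compatible way. The noncompactness of $J^3$ is by comparison a minor point, disposed of once and for all by the product-end structure so that every isotopy used has compact support.
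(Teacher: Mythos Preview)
\noindent The paper does not contain its own proof of this theorem. It is introduced with the sentence ``In \cite{HVV} it was proved the following result,'' and no argument follows; the statement is imported wholesale from the predecessor paper \cite{HVV} on cubical $1$-knots, where the analogous assertion for the isotopic cylinder $J^2\subset\mathbb{R}^4$ is established, and the present paper simply invokes that machinery one dimension up. Remark~\ref{conditions} likewise points the reader to \cite{BHV} and to ``the proof of Theorem~\ref{trace}'' for the underlying construction. So there is no in-paper proof to compare your proposal against.

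\noindent That said, your sketch is a reasonable reconstruction of what \cite{BHV} and \cite{HVV} actually do: reduce to a compact piece via the product ends, put the manifold in general position with respect to a fine cubulation, and push into the $3$-skeleton cell by cell using codimension two. You correctly isolate the one genuine difficulty that separates this statement from the bare cubulation theorem of \cite{BHV}, namely that the cell-by-cell pushes must be coordinated with $p$ so as not to disconnect any level set. One caution: your sentence that each elementary push ``can be chosen to respect the product fibration $C\to[k,k+1]$'' is exactly the assertion requiring proof, not something to be asserted in passing; the slicing conclusion stands or falls on making that step rigorous. Your sketch is honest in naming this as the obstacle, but it does not yet discharge it, so as written it is an outline rather than a proof.
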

 
\noi Consider $\widehat{J^3}$ as above; so $\widehat{J^3}$ is a cubical $3$-manifold and  there exist integer numbers 
$m_1$ and $m_2$ and cubical knots $\widehat{K^2_1}$ and $\widehat{K^2_2}$ isotopic to $K^2_{1}$ and $K^2_{2}$, respectively;
such that $p|_{\widehat{J^3}}^{-1}(t)= \widehat{K^2_1}$ for all $t\leq m_1$ and
$p|_{\widehat{J^3}}^{-1}(t)= \widehat{K^2_2}$ for all $t\geq m_2$.  

\subsection{Cubulated moves}

\begin{definition} The following are the allowed {\emph{cubulated moves}}:
\begin{description}
\item[{\bf{M1}}] {\emph{Subdivision:}} Given an integer $m>1$, consider the subcubulation  ${\cal{C}}_m$ of $\cal{C}$ by subdividing each $k$-dimensional cell of $\cal{C}$ in 
$m^k$ congruent $k$-cells in ${\cal{C}}_m$, in particular each hypercube in $\cal{C}$ is subdivided in $m^4$ congruent hypercubes in ${\cal{C}}_m$. Moreover as a cubical complex, each $k$-dimensional face of the 2-knot $K^2$ is subdivided into $m^k$ congruent 
$k$-faces. Since ${\cal{C}}\subset {\cal{C}}_{m}$, then
$K^2$ is contained in the scaffolding ${\cal{S}}^2_m$ (the $2$-skeleton) of ${\cal{C}}_m$. 
 
\item[{\bf{M2}}] {\emph{Face Boundary Moves:}} Suppose that $K^2$ is contained in some subcubulation ${\cal{C}}_m$ of  
the canonical cubulation ${\cal{C}}$ of $\mathbb{R}^{4}$. 
Let $Q^4\in {\cal{C}}_m$ be a $4$-cube such that
$A^2=K^2\cap Q^4$ contains a $2$-face. We can assume, up to applying the elementary $(M1)$-move if necessary, that $A^2$ consists of either one, 
two, or three
squares such that it is a connected surface and it is contained in the boundary of a 3-cube
$F^3\subset Q^4$; in other words $A^2$ is a cubical disk contained in the boundary of
$F^3$. The boundary $\partial F^3$ is divided by $\partial A^2$ into two cubulated surfaces, one of them is $A^2$ and we denote the other by $B^2$. Observe
that both  cubulated surfaces share a common circle boundary. The face boundary move  consists in replacing $A^2$ by $B^2$ (see Figure \ref{M1}). There are three types of face boundary moves depending of the number of 2-faces in each $A^2$ and $B^2$. If $A^2$ has $p$ squares then $B^2$ has $6-p$ squares. 

\begin{figure}[h] 
 \begin{center}
 \includegraphics[height=5cm]{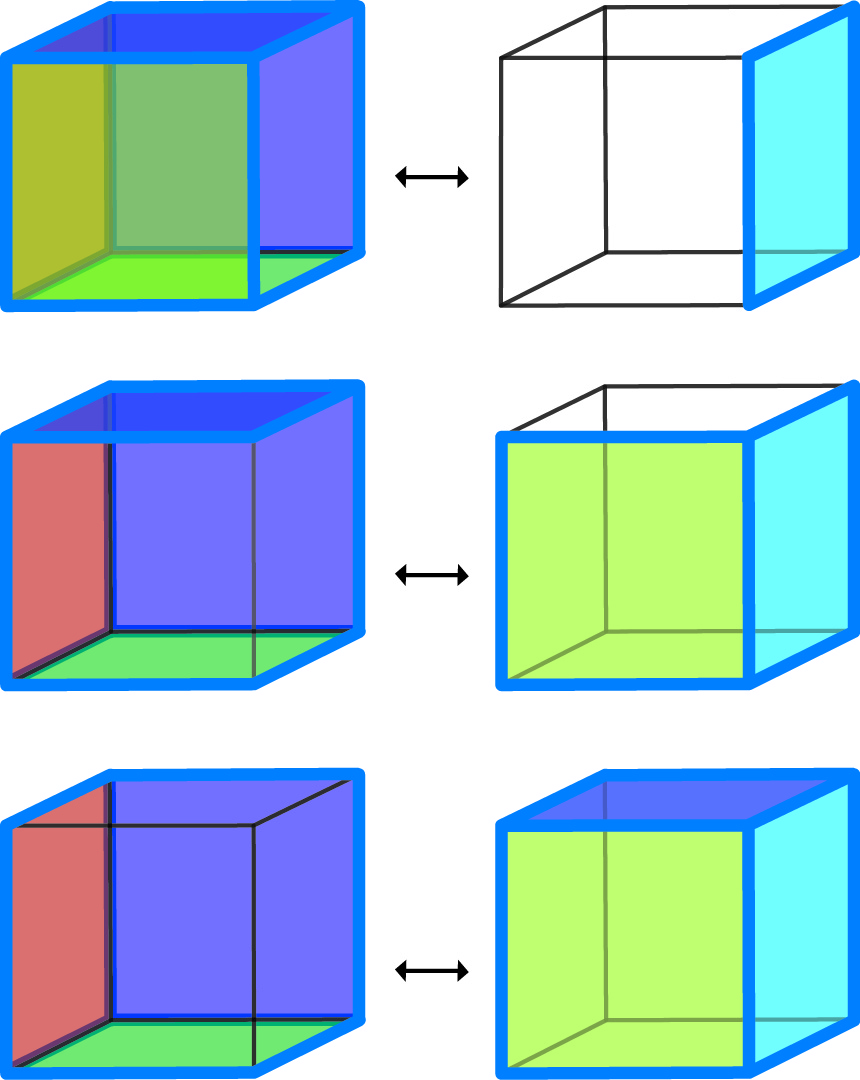}
\end{center}
\caption{\sl The three types of face boundary moves.} 
\label{M1}
\end{figure} 
\end{description}
\end{definition}

\begin{rem}\label{equivalencia}
It can be easily shown that the $(M2)$-move can be extended to an ambient isotopy of $\mathbb{R}^4$ .
\end{rem}

\begin{definition}
Given two cubical $2$-knots $K^2_1$ and $K^2_2$ in $\mathbb{R}^{4}$. We say that
$K^2_1$ is {\emph{equivalent}} to $K^2_2$ {\emph{by cubulated moves}}, denoted by 
$K^2_1\overset{c}\sim K^2_2$, if we can transform $K^2_1$ into $K^2_2$ by a finite number
of cubulated moves. 
\end{definition}

\section{Main theorem}

\noi We are now ready to prove our main theorem. Notice that this proof is a generalization of the one given in \cite{HVV} for the one dimensional case.

\begin{main}\label{main}
Given two cubical 2-knots $K^2_1$ and $K^2_2$  in $\mathbb{R}^{4}$,
 then they are isotopic if and only if $K^2_1$ is equivalent to $K^2_2$ by cubulated moves; {\emph{i.e.}}, $$K^2_1\sim  K^2_2 \iff K^2_1\overset{c}\sim  K^2_2.$$
\end{main}

\begin{proof} \noi First, note that if $K^2_1$ and $K^2_2$ are equivalent by cubulated moves, then
these 2-knots are isotopic by Remark \ref{equivalencia}.  Hence, what remains to be proved is
that two cubical 2-knots that are isotopic must also be equivalent by cubulated
moves.  

\noi Our strategy is like the one used for cubic knots of dimension one (see \cite{HVV}).  First, for $i \in \{1,2\}$, we will smooth each $K^2_i$ to obtain $\widetilde{K^2_i}$, and then cubulate these two 2-knots to obtain $\widehat{K^2_i}$ in such a way that 

A) $K^2_i \overset{c}\sim  \widehat{K^2_i} \quad$ and 

B) $\widehat{K^2_1}\overset{c}\sim  \widehat{K^2_2}$.   

\noindent In \cite{DHVV} was proved the following.    

\noindent{\bf Theorem.} \emph{Any compact, closed, oriented, cubical surface $M^2$ in $\mathbb{R}^{4}$ is smoothable. More precisely, $M^2$
admits a global continuous transverse field of 2-planes and therefore by a theorem of J. H. C. Whitehead there is an arbitrarily small topological 
 isotopy that moves $M^2$ onto a smooth surface $\widetilde{M^2}$ in $\R^4$ (see  \cite{pugh}, \cite{whitehead})}.

\noindent As a consequence, we have that  given a cubical knot $K^2$, there exists a smooth knot $\widetilde{K^2}$ isotopic to
$K^2$ such that $\widetilde{K^2}$ is ${\cal{C}}^0$-arbitrarily close to $K^2$.  

\noi Let $J^3$ be the isotopic cylinder (see Definition \ref{cylinder}) of $\widetilde{K^2_1}$ and $\widetilde{K^2_2}$.
Then $J^3$ is a smooth submanifold of codimension two in $\mathbb{R}^{5}$. 
By Theorem \ref{trace}, there exists an isotopic copy of $J^3$, say $\widehat{J^3}$, contained in the $3$-skeleton of the canonical 
cubulation $\cal{C}$ of $\mathbb{R}^{5}$. Recall that $\widehat{J^3}$ is sliced by connected level sets of $p$ and  there exist integer numbers 
$m_1$ and $m_2$ such that $p^{-1}(t)\cap \widehat{J^3} =\widehat{K^2_1}$ for all $t\leq m_1$ and
$p^{-1}(t)\cap \widehat{J^3} = \widehat{K^2_2}$ for all $t\geq m_2$, where $\widehat{K^2_1}$ and $\widehat{K^2_2}$ are cubical knots which are isotopic to $\widetilde{K^2_{1}}$ and $\widetilde{K^2_{2}}$, respectively.  

\noindent Now, we will use the following two results which will be proved in Sections \ref{smooth} and \ref{cubic}, respectively.  

\begin{LA}\label{smooth1} 
Given a cubical 2-knot $K^2$, we can choose a small cubulation ${\cal{C}}_{m}$  fine enough so that 
$N^4_{K}=\{Q^4\in {\cal{C}}_{m}\,|\,Q^4\cap K^2\neq\emptyset\}$ is a closed tubular neighborhood of $K^2$ and
$Q^4\cap K^2$ is equal to either a vertex, one square, two squares sharing an edge (neighboring 2-faces) or three neighboring squares (two by two neighboring 2-faces). 
We can also choose $\widetilde{K^2}$ isotopic to $K^2$ such that $\widetilde{K^2}$ is 
${\cal{C}}^0$-arbitrarily close to $K^2$ and $\widetilde{K^2}\subset \mbox{Int}(N^4_{K})$. Let 
$\widehat{K^2}$ be an isotopic copy of $\widetilde{K^2}$ contained in $\partial N^4_{K}$, then $K^2\overset{c}\sim  \widehat{K^2}$; {\emph{i.e.}}, we can
go from $K^2$ to $\widehat{K^2}$ by a finite sequence of cubulated moves.
\end{LA}

\begin{rem}\label{conditions}
\begin{enumerate}
\item The existence of $\widehat{K^2}$ is proved on Theorem 3.1 in \cite{BHV} (see also the proof of Theorem \ref{trace}).
\item We may assume, using a subdivision move if necessary, that the intersection $Q^4\cap K^2$ is equal to either a vertex, one square, two squares sharing an edge (neighboring 2-faces) or three neighboring squares (two by two neighboring 2-faces). 
\end{enumerate}
\end{rem}

\begin{LB}\label{cubemoves} 
Given two cubical knots $K^{2}_1$ and $K^{2}_2$, we obtain $\widehat{K^{2}_{1}}$ and $\widehat{K^{2}_{2}}$ as in Lemma A. Then
there exists a finite sequence
of cubulated moves that carries $\widehat{K^{2}_{1}}$ into $\widehat{K^{2}_{2}}$. In other words, $\widehat{K^{2}_{1}}$ is equivalent to $\widehat{K^{2}_{2}}$ by cubulated moves: $\widehat{K^{2}_{1}}\overset{c}\sim  \widehat{K^{2}_{2}}$
\end{LB}

\noindent If we go back to the proof of the Theorem 1, we have by Lemma A that there exist a finite sequence of cubulated moves that carries $K^{2}_{1}$ into $\widehat{K^{2}_{1}}$ and also a finite
sequence of cubulated moves that transforms $K^{2}_{2}$ into $\widehat{K^{2}_{2}}$. By Lemma B,
 there exists a finite sequence of cubulated moves that converts $\widehat{K^{2}_1}$ into $\widehat{K^{2}_2}$. As a consequence there exists a finite sequence
of cubulated moves that carries $K^{2}_{1}$ onto ${K}^{2}_{2}$. 
\end{proof}

\subsection{$K^{2}_i$ is equivalent to $\widehat{K^{2}_i}$ by cubulated moves}\label{smooth}

\noindent Our goal is to prove Lemma A.

\begin{LA} 
Given a cubical 2-knot $K^2$, we can choose a small cubulation ${\cal{C}}_{m}$  fine enough so that 
$N^4_{K}=\{Q^4\in {\cal{C}}_{m}\,|\,Q^4\cap K^2\neq\emptyset\}$ is a closed tubular neighborhood of $K^2$ and
$Q^4\cap K^2$ is equal to either a vertex, one square, two squares sharing an edge (neighboring 2-faces) or three neighboring squares (two by two neighboring 2-faces). 
We can also choose $\widetilde{K^2}$ isotopic to $K^2$ such that $\widetilde{K^2}$ is 
${\cal{C}}^0$-arbitrarily close to $K^2$ and $\widetilde{K^2}\subset \mbox{Int}(N^4_{K})$. Let 
$\widehat{K^2}$ be an isotopic copy of $\widetilde{K^2}$ contained in $\partial N^4_{K}$, then $K^2\overset{c}\sim  \widehat{K^2}$; {\emph{i.e.}}, we can
go from $K^2$ to $\widehat{K^2}$ by a finite sequence of cubulated moves.
\end{LA}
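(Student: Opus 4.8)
The plan is to realise, by a finite sequence of cubulated moves, the ``radial'' isotopy that carries $K^2$ from the core of the tubular neighbourhood $N^4_K$ out to the parallel cubical copy $\widehat{K^2}\subset\partial N^4_K$; this is the two-dimensional analogue of the argument used in \cite{HVV} for cubical $1$-knots.

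\textbf{Step 1 (normalising the cubulation).} First I would use Remark \ref{conditions}: after an $(M1)$-subdivision we may assume $\mathcal C_m$ is so fine that $N^4_K=\{Q^4\in\mathcal C_m\mid Q^4\cap K^2\neq\emptyset\}$ is a closed tubular neighbourhood of $K^2$ (hence $\partial N^4_K$ is a closed $3$-manifold) and, for every $Q^4\in N^4_K$, the piece $A^2:=K^2\cap Q^4$ is a vertex, one square, two squares sharing an edge, or three pairwise-adjacent squares. Since every $2$-face of a $4$-cube lies on its boundary, $A^2\subset\partial Q^4$, and, as $K^2$ is a surface, in each of these cases the squares of $A^2$ are $2$-faces of a common $3$-face $F^3_{Q^4}$ of $Q^4$, so that $A^2$ is a cubical disk contained in $\partial F^3_{Q^4}$. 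The existence of $\widetilde{K^2}$ and of $\widehat{K^2}\subset\partial N^4_K$ is Theorem 3.1 of \cite{BHV}; I would also record that, after shrinking the isotopy, $K^2$ and $\widehat{K^2}$ cobound inside $N^4_K$ a region $R\cong K^2\times[0,1]$ with $K^2=K^2\times\{0\}$ and $\widehat{K^2}=K^2\times\{1\}$, which is swept out by a finite collection of $3$-cubes, each a $3$-face of some $4$-cube of the current cubulation (refining by $(M1)$ if needed).

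\textbf{Step 2 (one elementary push is an $(M2)$-move).} The key observation is that replacing $A^2=K^2\cap Q^4$ by the complementary disk $B^2=\overline{\partial F^3_{Q^4}\setminus A^2}$ is precisely a face boundary move, and geometrically it is the elementary isotopy that drags $K^2$ across the single $3$-cube $F^3_{Q^4}$; if $A^2$ has $p\in\{1,2,3\}$ squares then $B^2$ has $6-p$ of them. Thus, pushing $K^2$ across the region $R$ amounts to pushing it across the $3$-cubes of $R$ one at a time, each step being an $(M2)$-move (preceded, when the current disk is not of one of the four allowed types, by an $(M1)$-subdivision that re-expresses the push as several smaller face boundary moves). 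I would then fix a shelling-type order $F^3_1,F^3_2,\dots$ of the $3$-cubes of $R$ compatible with the product direction $K^2\times[0,1]$ and apply the corresponding moves in that order; after finitely many of them the moving surface has crossed all of $R$ and coincides with $\widehat{K^2}$, which gives $K^2\overset{c}{\sim}\widehat{K^2}$.

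\textbf{Main obstacle.} The delicate step is keeping this procedure coherent. An $(M2)$-move inside $Q^4_j$ changes the knot on every $4$-cube sharing a face with $Q^4_j$, so one must verify that the hypotheses of the next move still hold, namely that the moving surface always meets each $4$-cube of the current cubulation in a cubical disk lying in the boundary of a single $3$-face, of one of the allowed local types after the normalising $(M1)$-move. Establishing this requires the classification of the local pieces from Step 1 together with a careful choice of the order of the $3$-cubes of $R$, so that each push only ``uncovers'' cells already crossed; this is the exact two-dimensional counterpart of the bookkeeping with edges carried out in \cite{HVV}, and it is equivalent to showing that the isotopy of \cite{BHV} producing $\widehat{K^2}$ can be factored through face boundary moves. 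Checking that only finitely many $(M1)$ and $(M2)$ moves are used is then immediate, since $R$ has finitely many $3$-cubes.
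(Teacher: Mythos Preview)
Your high-level strategy matches the paper's: build a cubical $3$-manifold $\mathcal{M}^3$ (your $R$) inside $N^4_K$, contained in the $3$-skeleton, with $\partial\mathcal{M}^3=K^2\sqcup\widehat{K^2}$, and then sweep $K^2$ to $\widehat{K^2}$ across the $3$-cubes of $\mathcal{M}^3$ by successive $(M2)$-moves. Your Step~2 and the inductive push are essentially the same as the paper's final paragraph.

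The substantive gap is in Step~1. You assert that ``$K^2$ and $\widehat{K^2}$ cobound inside $N^4_K$ a region $R\cong K^2\times[0,1]$ \dots\ swept out by a finite collection of $3$-cubes, each a $3$-face of some $4$-cube of the current cubulation (refining by $(M1)$ if needed)''. Topologically such a region exists, but the claim that it can be taken to be a \emph{union of $3$-faces of the cubulation} is exactly what the paper spends almost the entire proof establishing, and an $(M1)$-refinement alone does not produce it. The paper does \emph{not} use the product/radial structure of $N^4_K$ for this; instead it fixes the hypercubes $Q^4_j$ meeting both knots and, for each one, classifies the pair $(K^2\cap Q^4_j,\ \widehat{K^2}\cap Q^4_j)$: Claims~1--5 bound the number of squares of $\widehat{K^2}\cap Q^4_j$ (after preparatory $(M2)$-moves) in each of the five local types of $K^2\cap Q^4_j$ (three squares, two squares, one square, an edge, a vertex), and Cases~1--5 then explicitly list, subcase by subcase, the specific $3$-faces $M^3_j\subset Q^4_j$ whose union is $\mathcal{M}^3$. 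This exhaustive local analysis is the core of the argument, and it is precisely what you flag as ``the main obstacle'' without carrying it out.

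There is also a smaller point: in your Step~1 you only look at $K^2\cap Q^4$ and conclude it sits in a single $3$-face $F^3_{Q^4}$, but to build $\mathcal{M}^3$ one must simultaneously control $\widehat{K^2}\cap Q^4$, which in general is larger (up to five or six squares) and spread over several $3$-faces; the paper's Claims~1--5 are needed exactly to tame this. So your outline is on the right track and would converge to the paper's proof, but the ``bookkeeping'' you defer is not a routine adaptation of \cite{HVV}; it is the explicit five-case, many-subcase classification that constitutes the bulk of the paper's argument.
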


\noindent \emph{Proof.} 
The knots $K^2$ and $\widehat{K^2}$ are isotopic and both are contained in the 2-skeleton of $N^4_{K}$; however $K^2\subset \mbox{Int}(N^4_{K})$  and $\widehat{K^2}\subset\partial N^4_{K}$. Our goal is to construct a connected 3-manifold ${\cal {M}}^3$  such that it will be contained in the 3-skeleton of $N^4_{K}$ and its boundary 
will consist of two connected components, namely $K^2$ and $\widehat{K^2}$.  

\noindent Let $B^4=\{Q^4\subset N^4_{K} \,|\,\,\,Q^4\cap \widehat{K^2}\neq\emptyset\}$.   

\noindent Since all hypercubes in $N^4_{K}$ intersect $K^2$, $B^4$ consists
of a finite number of hypercubes, say $m$, such that all of them also intersect $\widehat{K^2}$. 
By orienting $\widehat{K^2}$ we can enumerate the cubes in $B^4$ in such a way that
consecutive numbers belong to neighboring hypercubes (hypercubes sharing a common 3-face). 
To construct the 3-manifold ${\cal {M}}^3$, we will look at all possible cases of $Q^4_j\in B^4$ 
and find pieces $M_j^3$ which will be consist of the union of some  3-faces  of $Q^4_j$ such that they are two by two neighboring faces. The union of all $M_j^3$ will be ${\cal {M}}^3$. 
Notice that the boundary of these $M_j^3$'s will 
intersect both $K^2$ and $\widehat{K^2}$, hence 3-faces corresponding to neighboring 4-cubes will share a common face.   

\noindent In order to describe the cubes $M_j^3$'s in this proof, we will use the following notation for the 3-faces of  a given hypercube $Q^4_j$.
\begin{figure}[h] 
 \begin{center}
 \includegraphics[height=4.6cm]{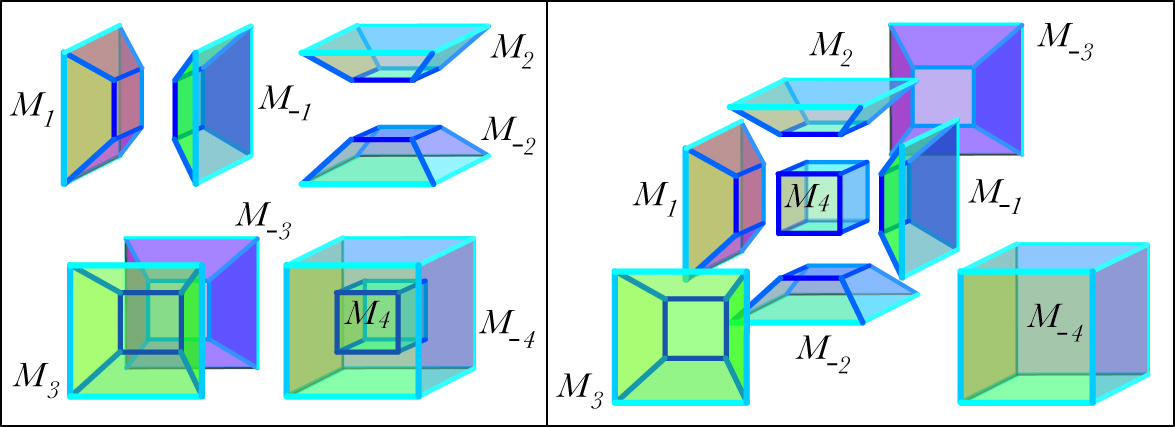}
 \caption{\sl Notation for the cubes in a hypercube $Q^4_j$.}
\end{center}
 
\label{not}
\end{figure}

\noindent Next, we will construct ${\cal {M}}^3$ considering
 all possible cases of both $K^2\cap Q^4_j$ and $\widehat{K^2}\cap Q^4_j$.  

\noindent {\bf Claim 1:} Let $Q^4_j\in B^4$ and suppose that $K^2\cap Q^4_j$ consists of  three squares (two by two neighboring 2-faces). 
Then, using face boundary moves if necessary, we can assume that 
$\widehat{K^2}\cap Q^4_j$ is a connected 2-disk consisting of the union of at most three squares.  

\noindent {\it Proof of Claim 1.} Suppose that $K^2\cap Q^4_j$ consists of three neighboring 2-faces $F_1^2$, $F_2^2$ and $F_3^2$. Observe that 
$F_1^2\cup F_2^2\cup F_3^2$ 
intersects fifteen distinct 2-faces of $Q^4_j$, hence $\widehat{K^2}\cap Q^4_j$ must be contained in the remaining six 2-faces. Notice that 
these six 2-faces are contained in a 3-face $C^3$ of $Q^4_j$ and applying  (M2)-moves if necessary, we can assume that at most three 2-faces of 
$\widehat{K^2}\cap Q^4_j$ lie on $C^3$. 
Therefore $K^2\cap Q^4_j$ consists of at most three 2-faces. See Figure \ref{3s}.1.
\begin{figure}[h] 
 \begin{center}
 \includegraphics[height=2.5cm]{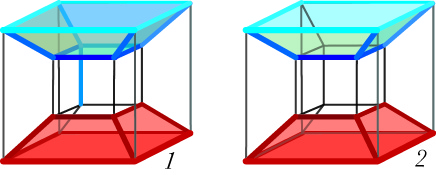}
\end{center}
\caption{\sl $K^2\cap Q^4_j$ consists of  three 2-faces.} 
\label{3s}
\end{figure}

\noindent This proves claim 1. $\square$

\noindent {\bf Case 1.} Suppose that $K^2\cap Q^4_j$ consists of three neighboring 2-faces $F_1^2$, $F_2^2$ and $F_3^2$. By the above claim, we have that
$\widehat{K^2}\cap Q^4_j$ is a connected 2-disk consisting of the union of at most three 2-faces; therefore $\widehat{K^2}\cap Q^4_j$ 
consists of at most three neighboring faces $E_1^2$, $E_2^2$ and $E_3^2$ such that $E_i^2$ is parallel
to $F_i^2$ ($i=1,2,3$).  Then the only possibility up to a face boundary moves $(M2)$ is shown in Figure \ref{3s}.2. Each $K^2\cap Q^4_j$  and  $\widehat{K^2}\cap Q^4_j$ consist of three neighboring 2-faces.
Thus  $M_j^3=M_3^3\cup M_{-1}^3\cup M_{-4}^3$.  

\noindent {\bf Claim 2:} \emph{Let $Q^4_j\in B^4$ and suppose that $K^2\cap Q^4_j$ consists of  two squares sharing an edge (neighboring 2-faces). Then, 
using face boundary moves if necessary, we can assume that $\widehat{K^2}\cap Q^4_j$ is a connected 2-disk consisting of the union of at most four faces.}  

\noindent {\it Proof of Claim 2.} Suppose that $K^2\cap Q^4_j$ consists of two neighboring 2-faces $F_1$ and $F_2$. Observe that the union 
$F_1\cup F_2$ intersects
 fifteen distinct 2-faces of $Q^4_j$, then $\widehat{K^2}\cap Q^4_j$ must be contained in the remaining seven 2-faces. 
 Notice that six of these seven 2-faces are contained in a 3-face $C^3$ of $Q^4_j$ and
applying  (M2)-moves if necessary, we can assume that three 2-faces of $\widehat{K^2}\cap Q^4_j$ lie on $C^3$. See Figure \ref{2s}.1.
Therefore $K^2\cap Q^4_j$ consists of at most four 2-faces. 

\begin{figure}[h] 
 \begin{center}
 \includegraphics[height=5cm]{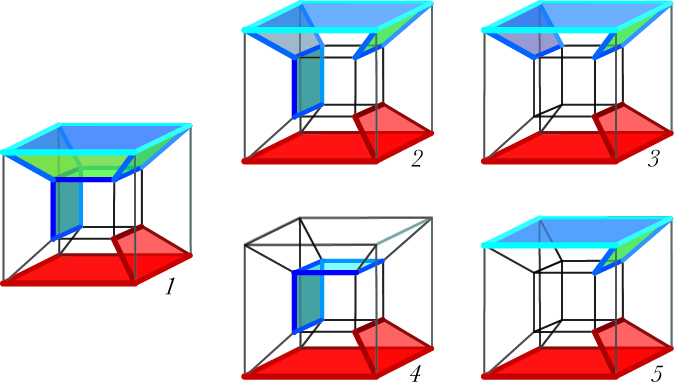}
\end{center}
\caption{\sl $K^2\cap Q^4_j$ consists of  two 2-faces.} 
\label{2s}
\end{figure} 
\noindent This proves claim 2. $\square$  
 
\noindent {\bf Case 2.} Suppose that $K^2\cap Q^4_j=F_1^2\cup F_2^2$ where $F_1^2$ and $F_2^2$ are two neighboring 2-faces. Since $\widehat{K^2}\cap Q^4_j$ is a connected 2-disk consisting of the union of at most four 2-faces, and $K^2\cap \widehat{K^2}=\emptyset$; we have that $\widehat{K^2}\cap Q^4_j$ is contained in the union of seven 2-faces
of $Q^4_j$, such that $F_1^2$, $F_2^2,\,\ldots,\,F_6^2$ belong to a 3-face $C^3$ and $F_7^2$  belongs to the opposite 3-face $\bar{C^3}$. See Figure \ref{2s}.1. Since $\widehat{K^2}$ intersects some  neighbor cubes of $Q^4_j$, we have three possibilities:
\begin{description}
\item [$(a)$] Suppose that $\widehat{K^2}\cap Q^4_j$ consists of the union of four squares.  Hence $F_7^2$ belongs to $\widehat{K^2}\cap Q^4_j$. Thus, there exist three 3-faces $M_1^3$, $M_{-1}^3$ and $M_{-4}^3$ that
contained two 2-faces of $(K ^2\cup \widehat{K^2})\cap Q^4_j$ (see Figure \ref{2s}.2). 
 Then $M_j^3=M_1^3\cup M_{-1}^3\cup M_{-4}^3$.

\item [$(b)$] Suppose that $\widehat{K^2}\cap Q^4_j$ consists of the union of three neighboring faces. Then considering all the possibilities 
satisfying that $K^2\cap Q^4_j$ can be extended, we have that  $\widehat{K^2}\cap Q^4_j$ consists of $F_1^2$, $F_2^2$ and $F_3^2$ such that 
$F_1^2$ and $F_3^2$ are disjoint,  $F_1^2$ and $F_2^2$ share an edge and so does
 $F_2^2$ and $F_3^2$ (see Figure \ref{2s}.3). So $M_j^3=M_1^3\cup M_{-1}^3\cup M_{-4}^3$.

\item [$(c)$] Suppose that $\widehat{K^2}\cap Q^4_j$ consists of the union of two neighboring faces. So considering all the options which $K^2\cap Q^4_j$ can be extended, we have two possibilities for $\widehat{K^2}\cap Q^4_j$: 
\begin{description}
\item [$(i)$] $\widehat{K^2}\cap Q^4_j$ is the union of the two 2-faces $F_7^2$ and $F_4^2$ (see Figure \ref{2s}.4). Then there exist two 3-faces $M_4^3$ and $M_{-2}^3$ containing two 2-faces of
$(K ^2\cup \widehat{K^2})\cap Q^4_j$. So $M_j^3=M_4^3\cup M_{-2}^3$.

\item [$(ii)$] $\widehat{K^2}\cap Q^4_j$ is the union of the two 2-faces  $F_2^2$ and $F_3^2$ (see Figure \ref{2s}.5). Then 
there exist two 3-faces $M_{-1}^3$ and $M_{-4}^3$ containing two 2-faces: one from $K ^2\cap Q^4_j$ and the other from  $\widehat{K^2}\cap Q^4_j$. 
So $M_j^3=M_{-1}^3\cup M_{-4}^3$.
\end{description}
\end{description}

\noindent {\bf Claim 3:} \emph{Let $Q^4_j\in B^4$ and suppose that $K^2\cap Q^4_j$ consists of one 2-face $F^2$. Then, using face boundary moves if necessary, we can assume that 
$\widehat{K^2}\cap Q^4_j$ is a connected 2-disk consisting of the union of at most five 2-faces.}  

\noindent {\it Proof of Claim 3.} Observe that $F^2$ intersects twelve distinct 2-faces of $Q^4_j$, then $\widehat{K^2}\cap Q^4_j$ must be contained
in the remaining eleven 2-faces. These eleven 2-faces are distributed in the following way: six of them are contained in a 3-face $C^3$ and the
remaining faces lie on a neighboring 3-face $D^3$ such that $C^3\cap D^3$ consists of one 2-face (see Figure \ref{1s}.1). Since $\widehat{K^2}\cap Q^4_j$ is a connected disk, 
then using (M2)-moves if necessary, we can assume that three 2-faces of $\widehat{K^2}\cap Q^4_j$ lie on $C^3$ and two 2-faces lie on $D^3$.
\begin{figure}[h] 
 \begin{center}
 \includegraphics[height=11cm]{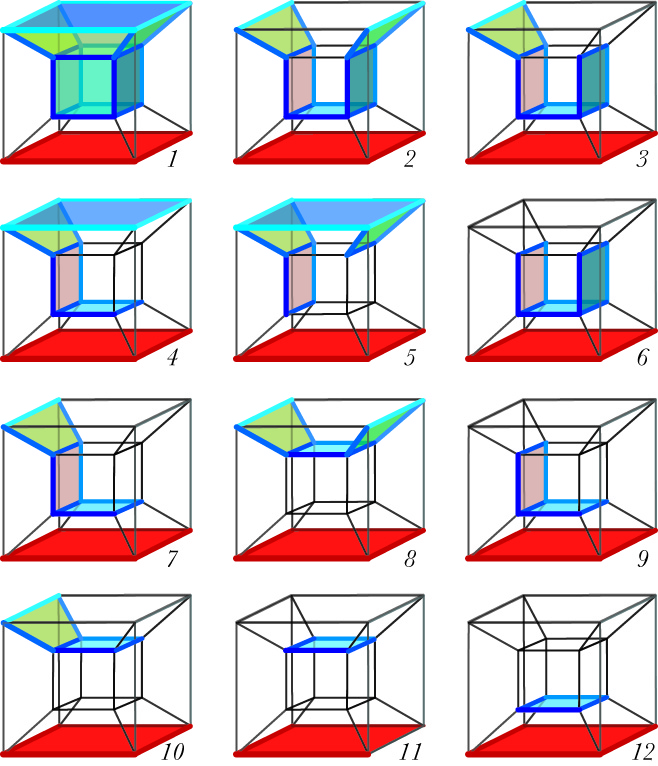}
\end{center}
\caption{\sl $K^2\cap Q^4_j$ consists of one 2-face.} 
\label{1s}
\end{figure} 
\noindent This proves claim 3. $\square$

 \noindent {\bf Case 3.} Suppose that $K^2\cap Q^4_j$ consists of one 2-face $F^2$. By claim 3,   we can assume that at most three 2-faces  
 $F_1^2$, $F_2^2$, $F_3^2$ of $\widehat{K^2}\cap Q^4_j$ lie on $C^3$ and two 2-faces $F_4^2$, $F_5^2$ lie on $D^3$. We have five possibilities:
\begin{description}
\item [$(a)$] Suppose that $\widehat{K^2}\cap Q^4_j$ consists of the union of five 2-faces. Then we have only one possibility for $\widehat{K^2}\cap Q^4_j$ 
(see Figure \ref{1s}.2).
There exist three 3-faces $M_1^3$, $M_{-2}^3$ and $M_{-1}^3$ which
contain two 2-faces of $(K ^2\cup \widehat{K^2})\cap Q^4_j$. Take $M_j^3=M_1^3\cup M_{-2}^3\cup M_{-1}^3$.

\item [$(b)$] Suppose that $\widehat{K^2}\cap Q^4_j$ consists of the union of four 2-faces. In this case, we have three possibilities:
\begin{description}
\item [$(i)$] $\widehat{K^2}\cap Q^4_j$ is the union of the four 2-faces $F_1^2$, $F_2^2$, $F_3^2$ and $F_4^2$. Then we have only one possibility (see Figure \ref{1s}.3). 
Let $M_{-2}^3$ be the 3-face of $Q^4_j$ such that $(K^2\cap Q^4_j)\cup F_2^2\subset M_{-2}^3$ and  let $M_1^3$ and $M_{-1}^3$ be the 3-faces of $Q^4_j$ such that each of them is a 
neighboring 3-face of $M_{-2}^3$ and  $F_1^2\subset M_1^3$ and $F_4^2\subset M_{-1}^3$. Then $M_j^3=M_1^3\cup M_{-1}^3\cup M_{-2}^3$.

\item [$(ii)$] $\widehat{K^2}\cap Q^4_j$ is the union of the 2-faces $F_1^2$, $F_2^2$ and $F_4^2$, $F_5^2$. Thus the only possible configuration is shown in 
Figure \ref{1s}.4. Let $M_1^3$, $M_{-2}^3$ and $M_{-1}^3$ be as above and let $M_{-4}^3$ be the neighboring 3-face such that $(K^2\cap Q^4_j)\cup F_5^2\subset M_{-4}^3$ 
so $M_j^3=M_1^3\cup M_{-2}^3\cup M_{-1}^3\cup M_{-4}^3$.

\item [$(iii)$] This configuration is similar to the one described in case $3b (i)$, but we exchange the cubes $C^3$ and $D^3$. Then, we get only one possibility 
(see Figure \ref{1s}.5 ). Let $M_j^3=M_1^3\cup M_{-1}^3\cup M_{-4}^3$.

\end{description}
\item [$(c)$] Suppose that $\widehat{K^2}\cap Q^4_j$ consists of the union of three 2-faces. Let $F_1^2$, $F_2^2$, $F_3^2$ be 2-faces contained in a
cube $C^3$ and let $F_4^2$, $F_5^2$, $F_6^2$ be 2-faces contained in the 3-cube $D^3$. Then we have three possibilities:
\begin{description}
\item [$(i)$] $\widehat{K^2}\cap Q^4_j$ is the union of the 2-faces $F_1^2$, $F_2^2$ and $F_3^2$.  
Let $M_1^3$, $M_{-1}^3$ and $M_{-2}^3$ be as above, so $M_j^3=M_1^3\cup M_{-1}^3\cup M_{-2}^3$ (see Figure \ref{1s}.6).

\item [$(ii)$] $\widehat{K^2}\cap Q^4_j$ is the union of the 2-faces $F_2^2$, $F_3^2$ and $F_6^2$. Thus $M_j^3=M_1^3\cup M_{-2}^3$
(see Figure \ref{1s}.7).

\item [$(iii)$] $\widehat{K^2}\cap Q^4_j$ is the union of the 2-faces $F_4^2$, $F_5^2$ and $F_6^2$. Let $M_{2}^3$ be the 3-face which contains the three squares $\widehat{K^2}\cap Q^4_j$. Then $M_j^3= M_2^3\cup M_{-4}^3 $ 
(see Figure \ref{1s}.8).

\end{description}
\item [$(d)$] Suppose that $\widehat{K^2}\cap Q^4_j$ consists of the union of two neighboring faces. Let $F_1^2$, $F_2^2$, $F_3^2$ be 2-faces contained in the
cube $C^3$ and let $F_4^2$, $F_5^2$, $F_6^2$ be 2-faces contained in the cube $D^3$. We have two possibilities:
\begin{description}
\item [$(i)$] $\widehat{K^2}\cap Q^4_j$ is the union of the 2-faces $F_1^2$ and $F_2^2$. Thus 
$M_j^3=M_1^3\cup M_{-2}^3$ 
(see Figure \ref{1s}.9).

\item [$(ii)$] $\widehat{K^2}\cap Q^4_j$ is the union of the 2-faces $F_3^2$ and $F_4^2$.  Thus 
$M_j^3=M_2^3\cup M_{-4}^3$ (see Figure \ref{1s}.10).

\end{description}
\item [$(e)$] Suppose that $\widehat{K^2}\cap Q^4_j$ consists of  one square. Then we have two possibilities:
\begin{description}
\item [$(i)$] $\widehat{K^2}\cap Q^4_j$ is the square $F_4^2$. Thus
$M_j^3=M_{-2}^3\cup M_4^3$
(see Figure \ref{1s}.11).

\item [$(ii)$] $\widehat{K^2}\cap Q^4_j$  is the 2-face $F_2^2$. Thus
$M_j^3=M_{-2}^3$
(see Figure \ref{1s}.12).

\end{description}

\end{description}

\noindent {\bf Claim 4:} \emph{Let $Q^4_j\in B^4$ and suppose that $K^2\cap Q^4_j$ consists of   an edge $e$. Then, using face boundary moves if necessary, we can assume that 
$\widehat{K^2}\cap Q^4_j$ is a connected 2-disk consisting of the union of at most six faces.}  

\noindent {\it Proof of Claim 4.} Suppose that $K^2\cap Q^4_j$ consists of an edge $e$. See Figure \ref{1e}. Then $\widehat{K^2}$ must turn on $Q^4_j$; in other words,  
$\widehat{K^2}\cap Q^4_j$ must contain two
faces $F_1^2$ and $F_2^2$ such that $F_1^2=v+\{ae_{i_1}+be_j\,:\,0\leq a,\,b\leq 1\}$ and $F_2^2=v+\{ce_{i_2}+de_j\,:\,0\leq c,\,d\leq 1\}$, where $e_{i_1}$, $e_{i_2}$ and
$e_{j}$ are distinct canonical vectors and $v$ is a vector with integer coordinates. Notice that $e_j$ is parallel to the edge $e$ and only 
nine 2-faces of $Q^4_j$ satisfy both they do not intersect $K^2$ and they have an edge parallel to $e_j$; 
hence $\widehat{K^2}\cap Q^4_j$ must be contained in the union of these nine 2-faces. Since
$\widehat{K^2}\cap Q^4_j$ is a connected disk it follows,  up to applying face boundary moves (M2) if necessary, that it consists of the union of at most six faces. 
\noindent This proves claim 4. $\square$  

\begin{figure}[h] 
 \begin{center}
 \includegraphics[height=11cm]{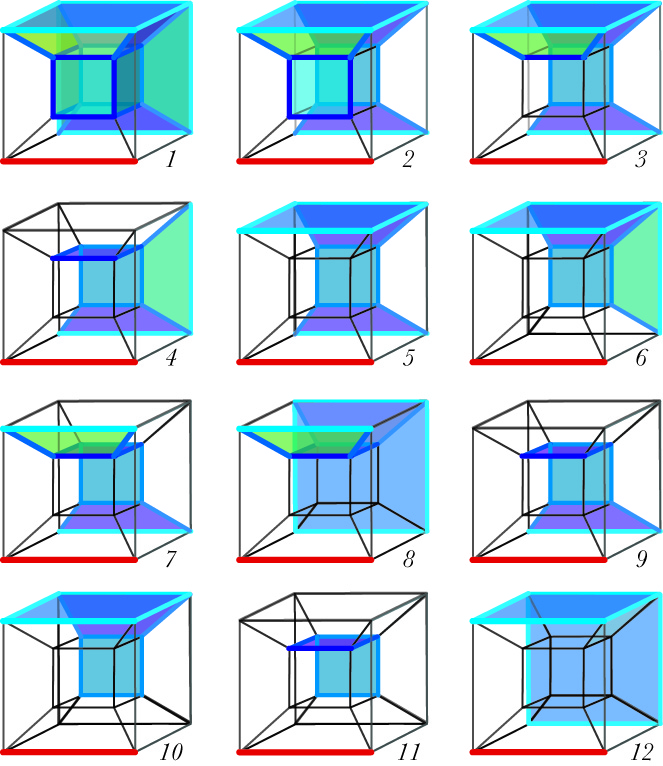}
\end{center}
\caption{\sl $K^2\cap Q^4_j$  is an edge.} 
\label{1e}
\end{figure}

 \noindent {\bf Case 4.} Suppose that $K^2\cap Q^4_j$ consists of an edge $e$. By claim 4, 
$\widehat{K^2}\cap Q^4_j$ is contained in three 3-faces $M_3^3$, $M_{-2}^3$ and $M_{-4}^3$ such that any two of them share a  2-face.
\begin{description}
\item [$(a)$] $\widehat{K^2}\cap Q^4_j$ consists of the union of six 2-faces. Then, applying  (M2)-moves if necessary, we have that $\widehat{K^2}\cap Q^4_j$ consists of the 
union of two 2-faces contained in $M_3^3$, three 2-faces  in $M_{-3}^3$ and one 2-face in $M_{-4}^3$ (see Figure \ref{1e}.2). Thus $M_j^3=M_3^3\cup M_{-3}^3\cup M_{-4}^3$.

\item [$(b)$] $\widehat{K^2}\cap Q^4_j$ consists of the union of five 2-faces. Then, applying  (M2)-moves if necessary, as in the previous case
we have that $M_j^3=M_3^3\cup M_{-3}^3\cup M_{-4}^3$. (see Figure \ref{1e}.3). 

\item [$(c)$] $\widehat{K^2}\cap Q^4_j$ consists of the union of four 2-faces. We have five possibilities:
\begin{description}
\item [$(i)$] $\widehat{K^2}\cap Q^4_j$ consists of two 2-faces contained in $M_4^3$ and three 2-faces  in $M_{-3}^3$. 
Thus $M_j^3=M_{-3}^3\cup M_2^3\cup M_{-4}^3$ (see Figure \ref{1e}.4).

\item [$(ii)$] $\widehat{K^2}\cap Q^4_j$ consists of one 2-face contained in $M_{-4}^3$ and three 2-faces  in $M_{-3}^3$. 
 Then $M_j^3=M_{-3}^3\cup M_{-4}^3$ (see Figure \ref{1e}.5).

\item [$(iii)$] $\widehat{K^2}\cap Q^4_j$ consists of one square contained in $M_{-4}^3$ and three 2-faces  in $M_{-3}^3$ (see Figure \ref{1e}.6). Then $M_j^3=M_{-3}^3\cup M_{-4}^3$.

\item [$(iv)$] $\widehat{K^2}\cap Q^4_j$ consists of two 2-faces contained in $M_{-3}^3$ and two 2-faces  in $M_2^3$ (see Figure \ref{1e}.7). 
Then $M_j^3=M_{-3}^3\cup M_{2}^3\cup M_{-4}^3$.

\item [$(v)$] $\widehat{K^2}\cap Q^4_j$ consists of three 2-faces contained in $M_2^3$ and one square in $M_{-4}^3$ (see Figure \ref{1e}.8). 
So $M_j^3= M_2^3\cup M_{-4}^3$.

\end{description}

\item [$(d)$] $\widehat{K^2}\cap Q^4_j$ consists of the union of three 2-faces. We have two possibilities:
\begin{description}
\item [$(i)$] $\widehat{K^2}\cap Q^4_j$ consists of one square contained in $M_2^3$ and two 2-faces  in $M_{-3}^3$ (see Figure \ref{1e}.9). 
 Then $M_j^3=M_2^3\cup M_{-3}^3\cup M_{-4}^3$.
\item [$(ii)$] $\widehat{K^2}\cap Q^4_j$ consists of two 2-faces contained in $M_{-3}^3$ and one 2-face  in $M_{-4}^3$ (see Figure \ref{1e}.10). 
 Then $M_j^3=M_{-3}^3\cup M_{-4}^3$.

\end{description}

\item [$(e)$] $\widehat{K^2}\cap Q^4_j$ consists of the union of two neighboring 2-faces. We have two possibilities:
\begin{description}
\item [$(i)$] $\widehat{K^2}\cap Q^4_j$ consists of two 2-faces contained in $M_4^3$  (see Figure \ref{1e}.11). 
 Then $M_j^3=M_4^3\cup M_3^3$.

\item [$(ii)$] $\widehat{K^2}\cap Q^4_j$ consists of two 2-faces contained in $M_{-4}^3$  (see Figure \ref{1e}.12). 
Thus $M_j^3=M_{-4}^3$.

\end{description}
\end{description}

\noindent {\bf Claim 5:} \emph{Let $Q^4_j\in B^4$ and suppose that $K^2\cap Q^4_j$ consists of  a vertex $v$. Then, using face boundary moves if 
necessary, we can assume that 
$\widehat{K^2}\cap Q^4_j$ is a connected 2-disk consisting of the union of at most six 2-faces.}  

\noindent {\it Proof of Claim 5.} 
 Since $K^2\cap Q^4_j$ consists of a vertex $v$, then $v$ must be a \emph{corner} of $K^2$ \emph{i.e.} $v$ is a common vertex of three 
 neighboring faces of 
$K^2$.  Hence $\widehat{K^2}$ must have a corner at some vertex of $Q^4_j$. Since $K^2\cap\widehat{K^2} =\emptyset$,
it follows that $\widehat{K^2}\cap Q^4_j$ can be contained in either one, two or three 3-faces of $Q^4_j$ such that
these 3-faces do not contained $v$. Suppose that $\widehat{K^2}\cap Q^4_j$ is a connected disk consisting of at least six 2-faces, 
then by a combinatorial analysis considering all the possible
descriptions of  $\widehat{K^2}$, we should have that
four of these 2-faces are contained in some 3-face of $Q^4_j$; hence applying a face boundary move (M2), we get that 
$\widehat{K^2}\cap Q^4_j$ can be reduced to at most six 2-faces. (see Figure \ref{1v}.1.)
\begin{figure}[h] 
 \begin{center}
 \includegraphics[height=5cm]{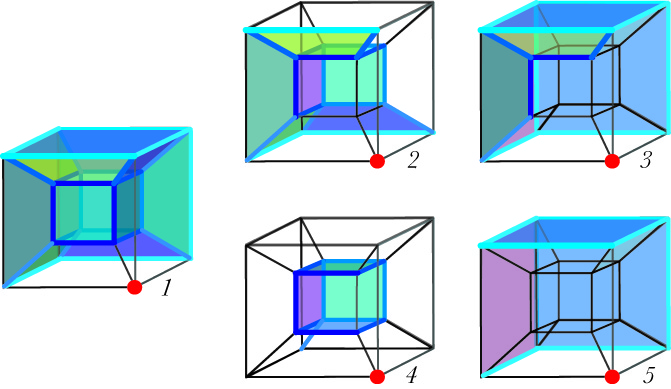}
\end{center}
\caption{\sl $\widehat{K^2}\cap Q^4_j$ consists of a vertex.} 
\label{1v}
\end{figure}  This proves claim 5. $\square$  

\noindent {\bf Case 5.} Suppose that $K^2\cap Q^4_j$ consists of a vertex $v$. By the above claim,
 $\widehat{K^2}\cap Q^4_j$ is the union of at least two 2-faces of $Q^4_j$ and its boundary must be
contained in the union of the three neighboring 3-faces containing $v$.
\begin{description}
\item [$(a)$] $\widehat{K^2}\cap Q^4_j$ is the union of six faces. Then $\widehat{K^2}\cap Q^4_j$ 
is contained in three 3-faces $M_{-2}^3$, $M_{3}^3$ and $M_{4}^3$ of $Q^4_j$ (see Figure \ref{1v}.2). 
Since  $\widehat{K^2}\cap Q^4_j$ is homeomorphic to a disk, we have that 
each 3-face  $M_{i}^3$ must contain two 2-faces of it and two of these three 3-faces contain $v$. 
Then $M_j^3=M_{-2}^3 \cup M_{3}^3\cup M_{4}^3$.

\item [$(b)$] $\widehat{K^2}\cap Q^4_j$ is the union of five 2-faces. Then $\widehat{K^2}\cap Q^4_j$ 
is contained in the union of two 3-faces $M_{-4}^3$ and  $M_{3}^3$ of $Q^4_j$ (see Figure \ref{1v}.3). So $M_j^3= M_3^3\cup M_{-4}^3$.

\item [$(c)$] $\widehat{K^2}\cap Q^4_j$ is the union of four 2-faces. Again, $\widehat{K^2}\cap Q^4_j$ 
is contained in a 3-face $M_{4}^3$ of $Q^4_j$ and its boundary must be
contained in the union of the three neighboring 3-faces containing $v$. Hence, one 3-face $M_{3}^3$, $M_{-1}^3$, $M_{-2}$ must contain one 2-face.  The only possibility is that four 2-faces lie in some 3-face $M_{4}^3$ of  $Q^4_j$ (see Figure \ref{1v}.4). 
 In this case $M_j^3=M_{4}^3$.

\item [$(d)$] $\widehat{K^2}\cap Q^4_j$ is the union of three neighboring 2-faces. These three 2-faces must be contained in some 3-face $M_{-4}^3$. 
Then $M_j^3=M_{-4}^3$ 
(see Figure \ref{1v}.5).

\item [$(e)$] $\widehat{K^2}\cap Q^4_j$ is the union of two neighboring 2-faces. These two 2-faces must be contained in some 3-face $M_{-4}^3$. 
Then $M_j^3=M_{-4}^3$.
\end{description}

\noindent Next, we will construct our 3-manifold ${\cal {M}}^3$.  

\noindent Let  ${\cal {M}}^3=\cup{M_j^3}$. By construction, each $M_j^3$ consists of the union of 3-faces contained in the  3-skeleton of ${N}_K^4$,
hence ${\cal{M}}^3$ is contained in the  3-skeleton of ${{N}}_K^4$.

\noindent As we mention before,  if we apply an (M2)-move on a hypercube $Q^4_j$, then this movement does not affect the corresponding choice of $M_{l}^3$ on its neighbor hypercube $Q^4_{l}$; in other words,
the choice of $M_j^3$ depends only of the configuration of $\widehat{K^2}\cap Q^4_j$ and $K^2\cap Q^4_j$ in $Q^4_j$. Observe that the boundary of each $M_j^3$ is composed by  2-faces belonging to  $K^2$ and $\widehat{K^2}$  and 
some disjoint faces $F^j_i$ ($i=1,2,\ldots ,s_j$) that do not belong to
neither $K^2$ nor $\widehat{K^2}$. Thus, if we take a neighbor hypercube of $Q^4_j$, say $Q^4_{j_{l}}\in B^4$, and we construct $M_j^3$ and $M_{j_{l}}^3$, then 
the intersection $M_j^3\cap M_{j_l}^3$ consists of the union of some $F^{j}_{r_i}$; notice that each of these 2-faces $F^j_i$ belongs to a neighbor hypercube of $Q^4_j$, 
$Q^4_{j_{l_s}}\in B$. Hence ${\cal{M}}^3$ is a 3-manifold whose boundary
consists of two connected components, namely $K^2$ and $\widehat{K^2}$.

\noindent Now, we will carry the knot $\widehat{K^2}$ onto the knot $K^2$ via a finite number of cubulated moves. Notice that ${\cal{M}}^3$ is 
the union 
of $m$ components $M_1^3,\,\ldots M_m^3$ which are enumerated in such a way that   $M_{n+1}^3$ is a 
neighbor of $M_n^3$ for all $n$, \emph{i.e.}, $M_{n+1}^3\cap M_n^3$ consists of a 2-face $F_n^2$.  

\noindent We will use induction on $m$. Consider $M_1^3$. We apply  (M2)-moves on $M_1^3$ in such a way that the faces (of any dimension)
belonging to $K^2$ are replaced by 
those belonging to $\widehat{K^2}$ (by construction $M_1^3\cap K^2\neq\emptyset$ and $M_1^3\cap \widehat{K^2}\neq \emptyset$).
Next, we consider $M_2^3$. By the previous step, $M_1^3$ and $M_2^3$ share 2-faces belonging to $\widehat{K^2}$. Then we apply again (M2)-moves, 
the faces (of any dimension) belonging to $K^2$ are replaced by those belonging to $\widehat{K^2}$. We continue this finite process in this way. 
Notice that if $l\subset  K^2$  
then $l\subset \partial {\cal{M}}^3$, thus $l$ is not a common 2-face of some pair $M_i^3$, $M_j^3$ belonging to ${\cal{M}}^3$, hence if 
$l$ is replaced by a 2-face belonging to $\widehat{K^2}$, then this replacement will be kept in the following steps. Therefore, the result follows. $\square$

\subsection{$\widehat{K^2_1}$ is equivalent to $\widehat{K^2_2}$ via cubulated moves}\label{cubic}

In this section, we shall prove the Lemma B which says that  $\widehat{K^2_1}$ is equivalent to $\widehat{K^2_2}$ by cubulated moves.  

\noi Consider again the projection $p:\mathbb{R}^{5}\hookrightarrow\mathbb{R}$ on the last coordinate. We call \emph{horizontal hyperplane} to an affine hyperplane
parallel to the space $\mathbb{R}^{4}\times\{0\}$. 
Thus $p^{-1}(t)=\mathbb{R}^{4}_t$ is a horizontal hyperplane. Observe that each hyperplane 
$\mathbb{R}^{4}_t$ has a canonical cubulation
given by the restriction of the canonical cubulation of $\mathbb{R}^{5}$ to it.  

\begin{definition}
 A $2$-cell ($2$-face) $F^2$ of the canonical cubulation $\cal{C}$ of $\mathbb{R}^{5}$  is called \textit{horizontal} if $p(F^2)$ is 
 a constant number in $\mathbb{N}$.
A $2$-cell $F^2$ is \textit{vertical} if $p(F^2)=[m,m+1]$ for some $m\in\mathbb{N}$.
\end{definition}

\begin{definition}
 Let $\Sigma^3$ be a cubulated $3$-manifold and $P^4$ be a horizontal hyperplane in $\mathbb{R}^{5}$.
We say that $P^4$  \emph{intersects transversally to} $\Sigma^3$, denoted by  $P^4\pitchfork\Sigma^3$, if $P^4$ intersects transversally each $k$-cube of $\Sigma^3$, $k\geq 1$.
\end{definition}

\begin{lem}
 Let $\mathbb{R}^{4}_t \subset \mathbb{R}^5$, $t\not\in\mathbb{Z}$ be an affine hyperplane. Then  $\mathbb{R}^{4}_t$ intersects $\widehat{J^3}$ transversally.
\end{lem}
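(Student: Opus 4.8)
The plan is to exploit the two facts already established about $\widehat{J^3}$: it is carried by the $3$-skeleton $\mathcal S^3$ of the canonical (integer) cubulation $\mathcal C$ of $\mathbb R^5$, and it is sliced by connected level sets of $p$. For the present lemma only the first fact is needed. Because $\widehat{J^3}\subset\mathcal S^3$, every cell $\sigma$ of $\widehat{J^3}$ is an axis-parallel $k$-face ($0\le k\le 3$) of a unit $5$-cube with integer vertices, hence of the normal form
$\sigma=\{x\in\mathbb R^5 : x_i=a_i \text{ for } i\notin S,\ a_i\le x_i\le a_i+1 \text{ for } i\in S\}$
for integers $a_i$ and a subset $S\subseteq\{1,\dots,5\}$ with $|S|=k$. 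First I would record this normal form and note the resulting dichotomy: either $5\in S$, and then $e_5$ lies in the constant tangent space $\operatorname{span}\{e_i:i\in S\}$ of $\sigma$, or $5\notin S$, and then $x_5$ is constant and equal to the integer $a_5$ on all of $\sigma$.

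Next I would check the transversality condition cell by cell against the fixed horizontal hyperplane $\mathbb R^4_t=p^{-1}(t)$ with $t\notin\mathbb Z$, matching the definition $P^4\pitchfork\Sigma^3$ (transversality with each $k$-cube, $k\ge 1$). If $5\notin S$, then $x_5\equiv a_5\in\mathbb Z$ on $\sigma$; since $t\notin\mathbb Z$ we get $t\ne a_5$, so $\sigma\cap\mathbb R^4_t=\emptyset$ and transversality holds vacuously — this is precisely where the hypothesis $t\notin\mathbb Z$ enters, guaranteeing that $\mathbb R^4_t$ avoids every vertex and, more generally, every horizontal cell of $\mathcal C$. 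If $5\in S$, then at every $x\in\sigma$ the tangent space of $\sigma$ contains $e_5$, while the tangent space of $\mathbb R^4_t$ is exactly $e_5^{\perp}=\operatorname{span}\{e_1,e_2,e_3,e_4\}$; hence their sum is already all of $\mathbb R^5$, so $\sigma$ meets $\mathbb R^4_t$ transversally. Running over all $k$-cells of $\widehat{J^3}$ with $k\ge 1$ then yields $\mathbb R^4_t\pitchfork\widehat{J^3}$.

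I do not expect a genuine obstacle: the statement is essentially bookkeeping once one uses that $\widehat{J^3}$ is supported on the integer $3$-skeleton. The only delicate point is to handle the two cases of the dichotomy against the correct clause of the definition of $\pitchfork$ — the tangent-space spanning condition when $5\in S$, and the degenerate (empty-intersection) case, in which transversality is automatic, when $5\notin S$. The "sliced by connected level sets" property of $\widehat{J^3}$ is not invoked here; presumably it will be used later to identify each slice $p|_{\widehat{J^3}}^{-1}(t)$ with a cubical knot.
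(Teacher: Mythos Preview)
Your argument is correct. You use the axis-parallel normal form of cells of $\mathcal S^3$ to split into horizontal cells ($5\notin S$), which miss $\mathbb R^4_t$ entirely because $t\notin\mathbb Z$, and vertical cells ($5\in S$), whose tangent space contains $e_5$ and therefore together with $T\mathbb R^4_t=e_5^{\perp}$ spans $\mathbb R^5$. This verifies the cellwise transversality in the paper's Definition directly and cleanly.

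The paper proceeds differently: rather than invoking the tangent-space criterion, it fixes a point $x\in\mathbb R^4_t\cap\widehat{J^3}$, observes that any $3$-cube $Q^3_i$ of $\widehat{J^3}$ containing $x$ must be vertical (again because $t\notin\mathbb Z$), and then does a short case analysis on whether $x$ lies in $\operatorname{Int}(Q^3_i)$, in the relative interior of a $2$-face, or on an edge, exhibiting in each case that $\mathbb R^4_t\cap Q^3_i$ is a square parallel to a $2$-face (and similarly for the neighbouring cubes). Thus the paper's proof is more descriptive: it not only checks transversality but writes down the slice explicitly as a union of unit squares, which is exactly what is used in the immediately following Corollary to conclude that $p^{-1}(x)\cap\widehat{J^3}$ is a cubulated surface. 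Your route is shorter and conceptually cleaner for the lemma as stated; the paper's route does a bit more work but yields the concrete description of the slice that the next step needs. Your closing remark that the ``sliced by connected level sets'' property is not needed here is accurate, though the paper does quote it at the outset of its proof.
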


\noindent{\it Proof.} By Theorem \ref{trace},  
$\mathbb{R}^{4}_t\cap \widehat{J^3}$ is connected. Let $x\in \mathbb\mathbb{R}^{4}_t\cap \widehat{J^3}$. Then
$x\in Q^3_i$, where $Q^3_i$ is a 3-face of the cubulation of $\mathbb{R}^{5}$. Notice that $Q^3_i$ is a vertical $3$-face, since $t\not\in\mathbb{Z}$. 
So, we have two possibilities:
either $x\in\mbox{Int}(Q^3_i)$ or  $x\in F^2\setminus\partial F^2$ where $F^2$ is a 2-face of $Q^3_i$, or $x$ belongs to an edge.
\begin{enumerate}
\item If $x\in\mbox{Int}(Q^3_i)$ then $\mathbb{R}^{4}_t\cap Q^3_i=E^2$, where $E^2$ is a square parallel to a 2-face and $x\in E^2$.
\item If $x$ belongs to $F^2\setminus\partial F^2$, then there exists another vertical $3$-face $Q^3_j$ such that $x\in Q^3_i\cap Q^3_j$. Thus
$\mathbb{R}^{4}_t\cap Q^3_i$ must be a square $E_i^2$ such that it is parallel to a 2-face,  and analogously $\mathbb{R}^{4}_t\cap Q^3_j$ is also a square $E_j^2$,
thus $x\in E_i^2\cap E_j^2$. 
\item If $x$ belongs to an edge $l$, then there exist  vertical $3$-faces $Q^3_{i_r}$ $r=1,2,3$ such that $x\in Q^3_j\cap_{r=1}^3 Q^3_{i_r}$. As in the previous case,
$\mathbb{R}^{4}_t\cap Q^3_{i_r}$ must be a square $E_{i_{r}}^2$ parallel to a 2-face,  and analogously $\mathbb{R}^{4}_t\cap Q^3_j$ is also a square $E_j^2$, hence
 $x\in E_j^2\cap_{r=1}^4 E_{i_r}^2$. 
\end{enumerate}

\noindent Therefore, the result follows. $\square$  

\begin{coro}
For $x\not\in\mathbb{Z}$, the set $p^{-1}(x)\cap \widehat{J^3}$ is a $2$-knot.
\end{coro}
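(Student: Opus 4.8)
The plan is to combine the previous Lemma with Theorem \ref{trace} to identify the slice $p^{-1}(x)\cap\widehat{J^3}$ as a cubical $2$-knot. First I would fix $x\notin\mathbb{Z}$ and set $S^2_x := p^{-1}(x)\cap\widehat{J^3} = \mathbb{R}^4_x\cap\widehat{J^3}$. By Theorem \ref{trace}, $\widehat{J^3}$ is sliced by connected level sets of $p$, so $S^2_x$ is connected; and by the preceding Lemma, $\mathbb{R}^4_x$ meets $\widehat{J^3}$ transversally in the sense defined above, i.e. it meets each $k$-cube ($k\ge 1$) of $\widehat{J^3}$ transversally. The first step is to record that $\widehat{J^3}$ is a smooth (in the PL/cubical sense) closed-in-$\mathbb{R}^5$ submanifold of dimension $3$ which near the slice level agrees with a product: since $x\notin\mathbb{Z}$, every $3$-cube of $\widehat{J^3}$ through a point of $S^2_x$ is vertical, and the local picture of $\widehat{J^3}$ near such a point is $(\text{cubical surface piece})\times[\,\cdot\,,\,\cdot\,]$ in the $p$-direction. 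Hence $S^2_x$ is a compact $2$-manifold without boundary (transversality rules out tangencies that could create boundary or corners of the wrong codimension, and the product structure shows $S^2_x$ is locally a cubical surface inside $\mathbb{R}^4_x$).

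Next I would argue that $S^2_x$ is in fact a $2$-sphere, which is the crux. The idea is to use the isotopy cylinder structure: recall $\widehat{J^3}$ is the cubulated isotopic cylinder of $\widetilde{K^2_1}$ and $\widetilde{K^2_2}$, so $\widehat{J^3}\cong\mathbb{S}^2\times\mathbb{R}$ as an abstract manifold, with $p$ corresponding (after isotopy) to a proper Morse-type function whose regular level sets are $2$-spheres. The hypothesis that $\widehat{J^3}$ is sliced by connected level sets forces every level $p^{-1}(x)$ with $x\notin\mathbb{Z}$ to be a single connected closed surface; combined with the fact that for $x\le m_1$ and $x\ge m_2$ the level set is exactly $\widehat{K^2_i}\cong\mathbb{S}^2$, and that $\widehat{J^3}$ carries no topology beyond that of $\mathbb{S}^2\times\mathbb{R}$ (in particular $H_*(\widehat{J^3})=H_*(\mathbb{S}^2)$), a connected closed level surface $S^2_x$ sitting inside $\widehat{J^3}$ and separating it must itself be a $2$-sphere. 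Concretely: $S^2_x$ is connected, closed, and bounds $p^{-1}((-\infty,x])\cap\widehat{J^3}$, which deformation retracts to $\widehat{K^2_1}\cong\mathbb{S}^2$; a standard argument (e.g. via the long exact sequence of the pair, or via the fact that a connected separating surface in $\mathbb{S}^2\times\mathbb{R}$ with each complementary piece having the homology of a point-times-ray must be genus $0$) yields $S^2_x\cong\mathbb{S}^2$.

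Finally I would collect the pieces: $S^2_x = p^{-1}(x)\cap\widehat{J^3}$ is a subset of the hyperplane $\mathbb{R}^4_x\cong\mathbb{R}^4$, it is homeomorphic to $\mathbb{S}^2$, and it lies in the $2$-skeleton of the canonical cubulation of $\mathbb{R}^4_x$ (it is a union of squares $E^2_i$ parallel to the coordinate $2$-faces, exactly as enumerated in the proof of the Lemma). Therefore $S^2_x$ is a cubical $2$-knot in the sense of the relevant definition, which is precisely the assertion of the Corollary.

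I expect the main obstacle to be the middle step — verifying rigorously that the connected closed level surface is a $2$-sphere rather than a higher-genus surface. Transversality and connectedness alone only give a closed surface; ruling out extra genus requires genuinely using that $\widehat{J^3}$ is an isotopic cylinder of $2$-knots (hence abstractly $\mathbb{S}^2\times\mathbb{R}$) together with the ``connected level sets'' hypothesis, and one must be slightly careful that the cubical/PL structure does not introduce pathologies at the vertices and edges where several vertical $3$-cubes meet (this is exactly what case (3) of the Lemma's proof is controlling). Everything else — compactness, the absence of boundary, and membership in the $2$-skeleton — is routine once the local product structure at non-integer levels is in hand.
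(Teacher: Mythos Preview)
Your proposal is correct and follows essentially the same route as the paper: use the preceding transversality lemma together with Theorem~\ref{trace} to see that the slice is a compact connected cubical surface, and then invoke $\widehat{J^3}\cong\mathbb{S}^2\times\mathbb{R}$ to conclude it is a $2$-sphere. The paper's proof is in fact much terser than yours---it simply asserts the last implication without the separating/homology argument you supply---so your additional care in the ``middle step'' is a welcome expansion rather than a deviation.
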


\noindent{\it Proof.} By the above, $p^{-1}(x)\cap \widehat{J^3}$ is a cubulated compact connected surface. Since
$\widehat{J^3}$ is homeomorphic to $\mathbb{S}^2\times\mathbb{R}$, it follows that $p^{-1}(x)\cap \widehat{J^3}$ is homeomorphic to $\mathbb{S}^2$. $\square$  

\noindent Now, for each $n\in\mathbb{N}$ we define 
$$
K^2_{n-}:= p^{-1}\left(n-\frac{1}{2}\right)\cap \widehat{J^3}
$$ 
and 
$$
K^2_{n+}:= p^{-1}\left(n+\frac{1}{2}\right)\cap \widehat{J^3}.
$$
Observe that $K^2_{n-}$ and $K^2_{n+}$ are cubical $2$-knots.

\noi Let ${\cal{C}}^3$ be the set of 3-cubes ($3$-cells) belonging to the canonical cubulation of $\mathbb{R}^{5}$. Consider the three spaces:
$$
M^3_{n-}:=\{Q^3\in \mathcal{C} ^3 \,|\,Q^3 \cap K^2_{n-}\neq\emptyset\}, 
$$

$$
M^3_{n+}:=\{Q^3\in \mathcal{C} ^3 \,|\,Q^3 \cap K^2_{n+}\neq\emptyset\} 
$$
and  $M^3_n:= p^{-1}(n)\cap \widehat{J^3}$. 

\noi By construction $M^3_{n-}= K^2_{n-}\times [0,1]$
and $M^3_{n+}= K^2_{n+}\times [0,1]$.  

\noindent Let $M^3:=M^3_{n-}\cup M^3_{n}\cup M^3_{n+}$. Hence
$M^3=\mbox{Cl}(p^{-1}(n-1,n+1)\cap \widehat{J^3})$, where Cl denotes closure.

\begin{lem}\label{circle}
 The space $M^3$ is homeomorphic to $\mathbb{S}^{2}\times [0,1]$.
\end{lem}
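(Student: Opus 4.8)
The plan is to recognize $M^3$ as the compact ``collar'' region of $\widehat{J^3}$ trapped between two horizontal $2$-spheres, and then to invoke the classification of codimension-zero submanifolds of $\mathbb{S}^2\times\mathbb{R}$. First I would unwind the definitions. As already observed, $M^3=\mathrm{Cl}(p^{-1}(n-1,n+1)\cap\widehat{J^3})$, so $M^3$ is a compact connected $3$-submanifold of $\widehat{J^3}$; and by Theorem \ref{trace}, since $\widehat{J^3}$ is isotopic to the isotopic cylinder $J^3\cong\mathbb{S}^2\times\mathbb{R}$, the ambient manifold $\widehat{J^3}$ is itself homeomorphic to $\mathbb{S}^2\times\mathbb{R}$. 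Using the product descriptions $M^3_{n-}=K^2_{n-}\times[0,1]$ and $M^3_{n+}=K^2_{n+}\times[0,1]$, with the interval factors running along $p$ over $[n-1,n]$ and $[n,n+1]$, the frontier of $M^3$ in $\widehat{J^3}$ is the disjoint union of the two horizontal $2$-spheres $\Sigma^-:=p^{-1}(n-1)\cap M^3_{n-}\cong K^2_{n-}$ and $\Sigma^+:=p^{-1}(n+1)\cap M^3_{n+}\cong K^2_{n+}$, while the level-$n$ piece $M^3_n=p^{-1}(n)\cap\widehat{J^3}$ sits in the interior.

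Next I would prove that $\Sigma^-$ and $\Sigma^+$ are \emph{essential} spheres in $\widehat{J^3}$, i.e.\ each separates the two ends of $\widehat{J^3}$. This is where the hypothesis that $\widehat{J^3}$ is sliced by connected level sets of $p$ is needed: $\Sigma^-$ lies in the connected surface $p^{-1}(n-1)\cap\widehat{J^3}$ and separates $\widehat{J^3}$ into a sublevel part reaching the end $t\to-\infty$ and a superlevel part reaching the end $t\to+\infty$, so $\Sigma^-$ cannot bound a ball on either side; by the (standard) classification of $2$-spheres in $\mathbb{S}^2\times\mathbb{R}$ it is therefore isotopic there to a fibre $\mathbb{S}^2\times\{\ast\}$, and likewise $\Sigma^+$. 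Finally, two disjoint $2$-spheres of $\mathbb{S}^2\times\mathbb{R}$ each isotopic to a fibre cobound a submanifold homeomorphic to $\mathbb{S}^2\times[0,1]$; applying this inside $\widehat{J^3}$ to $\Sigma^-$ and $\Sigma^+$, and recalling that $M^3$ is exactly the closure of the region between them, yields $M^3\cong\mathbb{S}^2\times[0,1]$.

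A more combinatorial variant, closer to the cubical setup, would instead analyze the three pieces separately: one checks that $M^3_n$ is a product collar joining the top sphere $p^{-1}(n)\cap M^3_{n-}$ of $M^3_{n-}$ to the bottom sphere $p^{-1}(n)\cap M^3_{n+}$ of $M^3_{n+}$ — that is, $M^3_n\cong\mathbb{S}^2\times[0,1]$ — so that $M^3=M^3_{n-}\cup M^3_n\cup M^3_{n+}$ is a concatenation of three copies of $\mathbb{S}^2\times[0,1]$ glued along boundary $2$-spheres, hence again $\mathbb{S}^2\times[0,1]$. In either approach the main obstacle, and the only delicate point, is the behaviour at the single integer level $n$: one must rule out that the slice (or plateau) $M^3_n$ carries extra topology. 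The interior non-integer levels cause no trouble, since for $t\notin\mathbb{Z}$ the slice $p^{-1}(t)\cap\widehat{J^3}$ is already known to be a $2$-sphere; controlling the level $n$ is precisely what forces one to use both the connectedness of \emph{every} level set of $p$ on $\widehat{J^3}$ and the global identification $\widehat{J^3}\cong\mathbb{S}^2\times\mathbb{R}$.
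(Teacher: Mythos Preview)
Your primary argument is correct and is essentially the paper's own approach, only spelled out in much greater detail: the paper simply records that $M^3$ is a compact connected submanifold of $\widehat{J^3}\cong\mathbb{S}^2\times\mathbb{R}$ whose complement has two connected components, and then asserts ``hence the result follows.'' Your discussion of why the boundary spheres $\Sigma^\pm$ are essential and your appeal to the classification of $2$-spheres in $\mathbb{S}^2\times\mathbb{R}$ are exactly what is needed to justify that last implicit step.

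One caution about your ``combinatorial variant'': in the paper's logical order this would be circular. The paper uses the present lemma (that $M^3\cong\mathbb{S}^2\times[0,1]$) as an input to the \emph{next} lemma, which determines the homotopy type of $M^3_n$; so you cannot assume $M^3_n\cong\mathbb{S}^2\times[0,1]$ here. You correctly flag the integer level $n$ as the delicate point, but the resolution in the paper goes the other way around---first prove $M^3\cong\mathbb{S}^2\times[0,1]$ globally (your first argument), and only then deduce the structure of $M^3_n$.
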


\noindent{\it Proof.} Since $M^3$ is a compact submanifold of $\widehat{J^3}$, then by Lemma \ref{trace}, it is also connected. Now $\widehat{J^3}$ is
homeomorphic to $\mathbb{S}^{2}\times\mathbb{R}$, and $\widehat{J^3}-M^3$ has two connected components, hence the result follows. $\square$ 

\begin{lem}
 $M^3_n$ has the homotopy type of $\mathbb{S}^{2}$.
\end{lem}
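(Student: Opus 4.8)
\noindent\emph{Proof proposal.} The plan is to exhibit $M^3_n$ as a deformation retract of the slab $M^3=M^3_{n-}\cup M^3_n\cup M^3_{n+}$ and then invoke Lemma~\ref{circle}. Since that lemma gives $M^3\cong\mathbb{S}^2\times[0,1]$, and $\mathbb{S}^2\times[0,1]$ is homotopy equivalent to $\mathbb{S}^2$, a deformation retraction of $M^3$ onto $M^3_n$ immediately yields $M^3_n\simeq\mathbb{S}^2$, which is the assertion.

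First I would record the local structure of the three pieces coming from the slicing of $\widehat{J^3}$ in Theorem~\ref{trace}. By construction $M^3_{n-}$ is the union of the vertical $3$-cells of $\mathcal{C}$ contained in the slab $p^{-1}([n-1,n])$ that meet the cubical $2$-knot $K^2_{n-}=p^{-1}(n-\tfrac{1}{2})\cap\widehat{J^3}$; it lies inside $\widehat{J^3}$ (this is forced by the identity $M^3=\mathrm{Cl}(p^{-1}(n-1,n+1)\cap\widehat{J^3})$), and it is a product $M^3_{n-}\cong K^2_{n-}\times[n-1,n]$ with $K^2_{n-}$ appearing as the middle slice $p=n-\tfrac{1}{2}$. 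Symmetrically, $M^3_{n+}\cong K^2_{n+}\times[n,n+1]\subseteq\widehat{J^3}$. Consequently $M^3_{n-}\cap p^{-1}(n)=K^2_{n-}\times\{n\}$ and $M^3_{n+}\cap p^{-1}(n)=K^2_{n+}\times\{n\}$ are cubical $2$-spheres lying in $M^3_n=p^{-1}(n)\cap\widehat{J^3}$, and $M^3_{n-}\cap M^3_{n+}\subseteq p^{-1}(n)\cap\widehat{J^3}=M^3_n$. In other words, $M^3$ is obtained from $M^3_n$ by gluing the two product collars $M^3_{n\pm}$ onto the $2$-spheres $K^2_{n\pm}\times\{n\}\subseteq M^3_n$ along one of their boundary components.

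Next I would collapse the two collars simultaneously. On $M^3_{n-}\cong K^2_{n-}\times[n-1,n]$ use the straight-line homotopy $((x,t),s)\mapsto(x,(1-s)t+sn)$, which deformation retracts $M^3_{n-}$ onto $K^2_{n-}\times\{n\}$ and is the identity on $K^2_{n-}\times\{n\}=M^3_{n-}\cap M^3_n$ for every $s$; do the mirror image on $M^3_{n+}$; and use the constant homotopy on $M^3_n$. Because the three homotopies all restrict to the identity on the pairwise intersections of their domains (which are contained in $M^3_n$), they patch together to a deformation retraction of $M^3$ onto $M^3_n$. Hence $M^3_n\hookrightarrow M^3$ is a homotopy equivalence, and by Lemma~\ref{circle} we conclude $M^3_n\simeq M^3\cong\mathbb{S}^2\times[0,1]\simeq\mathbb{S}^2$.

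The step I expect to demand the most care is the first one: verifying that $M^3_{n-}$ and $M^3_{n+}$ genuinely sit inside $\widehat{J^3}$ as embedded product collars, that each meets the flat part $M^3_n$ exactly along its inner boundary sphere $K^2_{n\pm}\times\{n\}$, and that the two collars meet only inside $M^3_n$. These are precisely the facts that make the patching in the previous paragraph legitimate, and they rely on how $\widehat{J^3}$ was placed in the $3$-skeleton by Theorem~\ref{trace} together with the identity $M^3=\mathrm{Cl}(p^{-1}(n-1,n+1)\cap\widehat{J^3})$. If one wished to avoid analysing the gluing loci directly, an alternative is to observe that $M^3\setminus M^3_n$ has exactly two components, each a half-open collar on one of the two boundary $2$-spheres of $M^3\cong\mathbb{S}^2\times[0,1]$, so that $M^3_n$ is the complement up to collars of two such neighbourhoods and is therefore again of the homotopy type of $\mathbb{S}^2$.
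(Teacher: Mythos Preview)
Your argument is correct and is in fact more economical than the paper's. You prove the result by collapsing the two product collars $M^3_{n\pm}\cong K^2_{n\pm}\times[0,1]$ onto their inner boundary spheres inside $M^3_n$, thereby exhibiting $M^3_n$ as a deformation retract of $M^3$, and then invoking Lemma~\ref{circle}. The paper instead proceeds algebraically: it collapses the two boundary spheres of $M^3\cong\mathbb{S}^2\times[0,1]$ to points to obtain $\widetilde{M^3}\cong\mathbb{S}^3$, and then applies Alexander duality to the pair $(\widetilde{M^3},M^3_n)$ to compute $\widetilde{H}^2(M^3_n)\cong\mathbb{Z}$ and $\widetilde{H}^1(M^3_n)\cong 0$, from which it extracts $\pi_2\cong\mathbb{Z}$ and $\pi_1\cong 0$. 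Your route avoids Alexander duality entirely and, as a bonus, already contains the content of the paper's \emph{next} lemma (that $M^3$ strongly retracts onto $M^3_n$); the paper establishes the homotopy type first and the retraction afterwards, whereas you get both at once. The only point you flag as delicate---that $M^3_{n\pm}$ are genuine product collars meeting $M^3_n$ exactly along $K^2_{n\pm}\times\{n\}$---is explicitly recorded in the paper just before Lemma~\ref{circle} (``By construction $M^3_{n-}=K^2_{n-}\times[0,1]$ and $M^3_{n+}=K^2_{n+}\times[0,1]$'') and in the proof of the retraction lemma (``$\partial M^3_n=(K^2_{n-}\times\{1\})\cup(K^2_{n+}\times\{0\})$''), so you may simply cite those facts.
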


\noindent{\it Proof.} Consider the set $M^3$. Then by the previous Lemma, we have that  $M^3\cong \mathbb{S}^{2}\times [0,1]$. Notice that
$K^2_{n-}\times\{0\}\cong \mathbb{S}^2\times\{0\}$ and $K^2_{n+}\times\{0\}\cong \mathbb{S}^2\times\{1\}$. 
Hence $\widetilde{M^3}=M^3/(K^2_{n-}\times\{0\})\cup (K^2_{n+}\times\{1\})$ is homeomorphic to $\mathbb{S}^3$. 
By Alexander duality,
we have that $\widetilde{H}_0(\widetilde{M^3}-M^3_n,\mathbb{Z})=\widetilde{H}^{2} (M_n^3,\mathbb{Z})$, but $\widetilde{M^3}-M_n^3$ has two simply connected components, so
$\widetilde{H}_{0} (\widetilde{M^3},\mathbb{Z})\cong \mathbb{Z}$. Since $M_n^3\subset M^3\cong \mathbb{S}^{2}\times [0,1]$, we have that either
$\pi_2 (M_n^3)\cong \{0\}$ or  $\pi_2 (M_n^3)\cong \mathbb{Z}$, but $\tilde{H}^{2}(M_n^3,\mathbb{Z})\cong\mathbb{Z}$, hence $\pi_2 (M_n^3)\cong \mathbb{Z}$. 

\noi Again by Alexander duality, we have that $\widetilde{H}_1(\widetilde{M^3}-M_n^3,\mathbb{Z})=\widetilde{H}^{1} (M_n^3,\mathbb{Z})$, but
$\widetilde{H}_1(\widetilde{M^3}-M_n^3,\mathbb{Z})\cong \{0\}$ hence $\widetilde{H}^{1} (M_n^3,\mathbb{Z})\cong \{0\}$. This implies that 
$\pi_1 (M_n^3)\cong \{0\}$.
Therefore, $M_n^3$ has the homotopy type of $\mathbb{S}^2$. $\square$  

\begin{lem}
The space $M^3$ retracts strongly to $M_n^3$.
\end{lem}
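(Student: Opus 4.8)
The plan is to build the strong deformation retraction explicitly, exploiting the product descriptions of the three pieces of $M^3$. Recall that $M^3=M^3_{n-}\cup M^3_n\cup M^3_{n+}$, where $M^3_{n\pm}$ are the unions of the $3$-cubes of $\widehat{J^3}$ lying over the intervals $[n-1,n]$ and $[n,n+1]$, and that by construction $M^3_{n-}=K^2_{n-}\times[0,1]$ and $M^3_{n+}=K^2_{n+}\times[0,1]$; here the interval factor may be taken to be the restriction of $p$, reparametrized affinely so that $p\in[n-1,n]$ on $M^3_{n-}$ and $p\in[n,n+1]$ on $M^3_{n+}$. Since $n-\tfrac12,\ n+\tfrac12\notin\mathbb{Z}$, every cube of $M^3_{n\pm}$ spans exactly one unit in the $p$-direction, so the ``level $n$'' face of $M^3_{n-}$ is $p^{-1}(n)\cap M^3_{n-}=K^2_{n-}\times\{1\}$ and similarly the ``level $n$'' face of $M^3_{n+}$ is $K^2_{n+}\times\{0\}$; both lie inside $M^3_n=p^{-1}(n)\cap\widehat{J^3}$.

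First I would record the overlap pattern: $M^3_{n-}\cap M^3_n=K^2_{n-}\times\{1\}$, $\ M^3_{n+}\cap M^3_n=K^2_{n+}\times\{0\}$, and $M^3_{n-}\cap M^3_{n+}\subseteq p^{-1}(n)\cap\widehat{J^3}=M^3_n$, so the only points common to two of the three pieces sit at level $p=n$. Then I define $H\colon M^3\times[0,1]\to M^3$ by collapsing each of $M^3_{n-}$ and $M^3_{n+}$ onto its level-$n$ face along the product coordinate while fixing $M^3_n$: for $x\in M^3$ with $p(x)=t$, let $H(x,s)$ be the point of $M^3$ having the same $K^2_{n-}$-coordinate as $x$ (if $t\le n$), respectively the same $K^2_{n+}$-coordinate as $x$ (if $t\ge n$), and $p$-value $(1-s)t+sn$; and set $H(x,s)=x$ for $x\in M^3_n$. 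The value $(1-s)t+sn$ is a convex combination of $t$ and $n$, hence stays in $[n-1,n]$ when $t\le n$ and in $[n,n+1]$ when $t\ge n$, so $H$ does land in $M^3$. At $t=n$ the two product rules both reduce to the identity, and on $M^3_{n\pm}\cap M^3_n$ the product rule is again the identity, so the three partial definitions agree on all overlaps.

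Since $M^3_{n-},M^3_n,M^3_{n+}$ are closed and cover $M^3$, and the partial maps are jointly continuous in $(x,s)$ and agree on intersections, the pasting lemma yields a continuous $H$. By construction $H(\cdot,0)=\mathrm{id}_{M^3}$, $H(M^3,1)=M^3_n$, and $H(\cdot,s)\big|_{M^3_n}=\mathrm{id}_{M^3_n}$ for every $s$; hence $H$ is a strong deformation retraction of $M^3$ onto $M^3_n$.

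The only genuinely delicate point I expect is the verification, upstream of the retraction itself, that the product structures $M^3_{n\pm}\cong K^2_{n\pm}\times[0,1]$ can be chosen compatibly with $p$ and that their level-$n$ faces lie inside $M^3_n$ — that is, the interplay between ``$M^3_{n\pm}$ is a union of unit-height vertical cubes of $\widehat{J^3}$'' and the ``sliced by connected level sets'' conclusion of Theorem \ref{trace} (together with the fact, already used for Lemma \ref{circle}, that $\widehat{J^3}\cong\mathbb{S}^2\times\mathbb{R}$). Granting that, the retraction is the evident vertical collapse and the continuity check is routine.
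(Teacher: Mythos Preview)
Your proof is correct and follows essentially the same route as the paper's: both use the product decompositions $M^3_{n\pm}\cong K^2_{n\pm}\times[0,1]$ to retract those pieces onto their level-$n$ faces $K^2_{n-}\times\{1\}$ and $K^2_{n+}\times\{0\}$, which sit inside $M^3_n$, while keeping $M^3_n$ fixed. The paper states this in two sentences; you make the homotopy explicit and invoke the pasting lemma, but the underlying argument is identical.
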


\noindent{\it Proof.} Since $M^3=M^3_{n-}\cup M^3_{n}\cup M^3_{n+}$ is homeomorphic to $\mathbb{S}^2\times [0,1]$, 
and $M^3_{n-}= K^2_{n-}\times [0,1]$ and $M^3_{n+}= K^2_{n+}\times [0,1]$, 
we have that 
$M^3_{n-}= K^2_{n-}\times [0,1]$ retracts strongly to $K^2_{n-}\times \{1\}$ and
$M^3_{n+}= K^2_{n+}\times [0,1]$ retracts strongly to $K ^2_{n+}\times \{0\}$. Now
$\partial M_n^3=(K^2_{n-}\times\{1\})\cup (K^2_{n+}\times\{0\})$. Therefore, the result follows. $\square$

\noindent Next, we are going to describe the subset $M_n^3$. Notice that the squares of $M_n^3$ are of four types, which we will denote 
by $T_{-}$, $T_{+}$, $T_{\pm}$ and $T$.

\begin{itemize}
\item A  square $F^2\subset M_n^3$ belongs to $T_{-}$ if  $F^2\subset M^3_{n-}$ but $F^2\not\subset M^3_{n+}$.

\item A square $F^2\subset M_n^3$ belongs to $T_{+}$ if $F^2\subset  M^3_{n+}$ but $F^2\not\subset M^3_{n-}$.

\item A square $F^2\subset M_n^3$ belongs to $T_{\pm}$ if $F^2\subset  M^3_{n-}\cap M^3_{n+}$.

\item A square $F^2\subset M_n^3$ belongs to $T$ if $F^2\not\subset  M^3_{n+}\cup M^3_{n-}$.
\end{itemize}

\noi By the above Lemma, there are copies of $K^2_{n-}$ and $K^2_{n+}$ contained in $\partial M_n^3$. By abuse of notation we will
denote them in the same way. Notice that $K^2 _{n-}$ is the union of squares of types $T_{-}$ and $T_{\pm}$, and
$K^2_{n+}$ is the union of squares of types $T_{+}$ and $T_{\pm}$.

\begin{lem}\label{fns}
$K^2_{n-}\overset{c}\sim  K^2_{n+}:$ There exists a finite sequence of cubulated moves that carries the  2-knot $K^2_{n-}$ into the 2-knot $K^2_{n+}$.
\end{lem}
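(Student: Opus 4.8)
The plan is to exploit the structural description of $M_n^3$ just established: it is a compact $3$-manifold with boundary, homotopy equivalent to $\mathbb S^2$, whose boundary components are the cubical $2$-knots $K^2_{n-}$ and $K^2_{n+}$, and which sits in the $3$-skeleton of the cubulation of $\mathbb{R}^5$. Morally, $M_n^3$ is the ``isotopic cylinder'' realizing $K^2_{n-} \sim K^2_{n+}$ inside a single slab $p^{-1}[n-1,n+1]$, and the job is to convert this topological cobordism into a finite sequence of (M2)-moves. This is exactly the one-slab analogue of what Lemma A does, so I would model the argument on the proof of Lemma A given above: enumerate the $3$-cells $M_j^3$ comprising $M_n^3$ so that consecutive ones share a $2$-face, then peel them off one at a time.

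First I would make precise the cell structure. Decompose $M_n^3$ as a union of its $3$-faces $N_1^3,\dots,N_r^3$ (the cells of type $T$, $T_-$, $T_+$, $T_\pm$ organize which boundary squares each contributes). Since $M_n^3$ has the homotopy type of $\mathbb S^2$ and $\partial M_n^3 = K^2_{n-} \sqcup K^2_{n+}$, one can order the $N_j^3$ into a sequence in which each $N_{j+1}^3$ meets $N_1^3 \cup \dots \cup N_j^3$ in a nonempty union of $2$-faces and the ``frontier'' after step $j$ is again a cubical $2$-sphere isotopic to $K^2_{n-}$; this uses connectedness of $\widehat{J^3}$ (hence of $M_n^3$) from Theorem \ref{trace}, together with the fact that removing a collar of one boundary sphere from $\mathbb S^2\times[0,1]$ leaves $\mathbb S^2\times[0,1]$. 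The key local observation, exactly as in Lemma A, is that for each $3$-cell $N_j^3$ the portion of its boundary lying on $K^2_{n-}$-side and the portion on the $K^2_{n+}$-side are complementary cubical disks in $\partial N_j^3 \cong \partial I^3$, so replacing one by the other is precisely an (M2) face-boundary move (one of the three types, according to how many of the six squares are involved).

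Then I would run the induction: starting from $K^2_{n-}$, apply the (M2)-move supported on $N_1^3$ to push the knot across that cell; the result is a cubical $2$-knot agreeing with $K^2_{n+}$ on the squares of $\partial N_1^3$ and with $K^2_{n-}$ elsewhere. Proceed through $N_2^3,\dots,N_r^3$ in order. As in Lemma A, the crucial bookkeeping point is that a square belonging to $K^2_{n+}$ (equivalently, lying in $\partial M_n^3$ on the $+$ side) is never an interior shared $2$-face of two cells $N_i^3, N_j^3$, so once it has been installed it is never disturbed by a later move; likewise a square on the $K^2_{n-}$ side, once removed, is not reinstated. After all $r$ moves every square of $K^2_{n-}$ has been traded for the corresponding square of $K^2_{n+}$, so the final cubical $2$-knot is $K^2_{n+}$. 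Hence $K^2_{n-}\overset{c}\sim K^2_{n+}$, and since both are contained in parallel copies of the canonical cubulation of $\mathbb R^4$ (the hyperplanes $\mathbb R^4_{n-1/2}$ and $\mathbb R^4_{n+1/2}$ carry canonically identified cubulations), this is the desired statement.

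The main obstacle I anticipate is not the induction itself but justifying that the $3$-cells of $M_n^3$ can always be ordered with the required ``each new cell contributes a connected complementary-disk worth of $2$-faces, and the running frontier stays a $2$-sphere'' property — i.e., that $M_n^3$ admits a shelling-like filtration compatible with its product structure. This is where one genuinely uses Lemma \ref{circle} ($M^3 \cong \mathbb S^2\times[0,1]$), the strong retraction of $M^3$ onto $M_n^3$, and the slicing of $\widehat{J^3}$ by connected level sets from Theorem \ref{trace}; careful handling of the $T$, $T_\pm$ squares (those not on either boundary copy, resp. on both) is the fiddly part, and may require an auxiliary (M1)-subdivision to guarantee that each local move is one of the three standard face-boundary moves rather than something degenerate.
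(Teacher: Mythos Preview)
Your proposal is correct and follows essentially the same strategy as the paper: regard $M_n^3$ as a cubical cobordism between $K^2_{n-}$ and $K^2_{n+}$, order its $3$-cells, and push the knot across one cube at a time by (M2)-moves, using that $T_{+}$-faces lie in $\partial M_n^3$ and hence are never undone. The paper resolves the ordering obstacle you flag not by an abstract shelling argument but by a case split on $K^2_{n-}\cap K^2_{n+}$ (empty, finitely many faces, or containing $T_\pm$-squares) together with a concrete level-by-level enumeration via an auxiliary projection $q:\mathbb{R}^3\to\mathbb{R}$; in the $T_\pm$ case it shows the $3$-dimensional components of $M_n^3$ are homotopy $3$-balls and treats each separately.
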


\noi {\it Proof.} We will show it by cases.  
\noi Case 1. Suppose that $K^2_{n-}=K^2_{n+}$.  Clearly, the result is true.  

\noi Case 2. Suppose that $K^2_{n-}\cap K^2_{n+}=\emptyset$. In other words, $K^2_{n-}$ and 
$K^2_{n+}$
 do not have squares of type $T_{\pm}$.   
Remember that $M_n^3$ is a cubical compact 3-manifold whose fundamental group is isomorphic to $\mathbb{Z}$. Thus  $\partial M_n^3$
has two connected components; namely $K^2_{n-}$ and $K^2_{n+}$, such that their intersection is empty.
Hence $M_n^3$ is the union of a finite number of cubes (3-faces) belonging to the 3-skeleton of the cubulation $\cal C$, whose 2-faces are of any of the types $T_{-}$, $T_{+}$ and $T$.  

\noi Next, we will carry the $2$-knot $K^2_{n-}$ onto the $2$-knot $K^2_{n+}$ via a finite number of cubulated moves; {\it i.e.} we will
carry the squares of type $T_{-}$ onto the squares of type $T_{+}$. Let $Q^3$ be a cube contained in $M_n^3$. We can assume, up to $(M1)$-move, that if a square
$F^2\subset Q^3$ belongs to $T_{-}$, then $Q^3\cap K^2_{n+}=\emptyset$  and
$Q^3\cap K^2_{n-}$  consists of either a square, two neighboring 2-faces or three neighboring 2-faces. Analogously, if
 $F^2\subset Q^3$ belongs to $T_{+}$, then $Q^3\cap K^2_{n-}=\emptyset$ and
$Q^3\cap K^2_{n+}$ consists of a square, two neighboring 2-faces or three neighboring 2-faces.  

\noi The compact space $M_n^3$ is the union of a finite number of cubes, say $m$. We will enumerate them by levels in the following way. Remember that $M_n^3$ is a cubical compact 3-manifold, so we may assume that $M_n^3\subset\mathbb{R}_+^3$. 
Let $q:\mathbb{R}^3\hookrightarrow \mathbb{R}$ be the projection
on the last coordinate. We define  the level $k$, $M_k^3:=M_n^3\cap q^{-1}([k,k+1])$, for $k\in\mathbb{N}$. Notice that  there exists $k_1$ and $k_2$ positive integers such that $M_k^3=\emptyset$ for $k_1\leq k\leq k_2$. Then we start enumerating the cubes $Q^3\in M^3$ by levels. We start at the level $k_1$. The first cube
$Q_1^3$ contains a 2-face of type $T_{-}$, and given the cube $Q_n^3$, the cube  $Q_{n+1}^3$ shares a 2-face $F_n^2$ with  $Q_n^3$ and whenever 
it is possible, we choose $Q_{n+1}^3$ in such a way that $F_n^2$ is parallel to $F_{n-1}^2$; otherwise we choose  $Q_{n+2}^3$ such that $F^2_{n+1}$ is parallel
to $F^2_{n-1}$, at the end we continue on the level $k_1+1$ and so on.  

\noi We will use induction on $m$. Consider the cube $Q_1^3$. We apply the $(M2)$-move to $Q_1^3$ replacing the 2-faces of type $T$ by 2-faces of type  $T_{-}$.
We consider $Q_2^3$. Observe that $Q_1^3$ and $Q_2^3$ share a 2-face of type $T_{-}$. Then we apply again the $(M2)$-move replacing the 2-faces of type 
$T$ by 2-faces of type  $T_{-}$. We continue inductively. 

\noi Notice that if $F^2\subset  M_n^3$ is a 2-face of type $T_{+}$ then $F^2\subset \partial M_n^3$;
so $F^2$ is not a common 2-face of two cubes $Q_i^3$ and $Q^3_j$ in $M_n^3$; hence if  $F^2$ is replaced by a 2-face of type $T_{-}$ then this replacement is not modified in 
any other next step. Therefore, the result follows.  

\noi Case 3. Suppose that the intersection $K^2_{n-}\cap K^2_{n+}$ contains a finite number of 2-faces.  
\noi The 3-manifold $M_n^3$ consists of connected components $C_i^3$, $i=1,\ldots,r$ such that each $C_i^3$ is the union of cubes
$Q_{i_1}^3, \ldots, Q_{i_{m_i}}^3\in{\cal{C}}$ and the intersection $C_i^3\cap C_j^3$ is either empty or a square belonging to 
$K^2_{n-}\cap K^2_{n+}$. Therefore, we apply the previous argument to each $C_i^3$.

\noi Case 4. Suppose that the intersection $K^2_{n-}\cap K^2_{n+}$ contains a square of type $T_{\pm}$. The 3-manifold $M_n^3$ consists of 3-dimensional connected components $C_i^3$, $i=1,\ldots,r$
 and cubical 2-disks $\gamma_{ij}$. As before, each $C_i^3$ is a union of cubes $Q_{i_1}^3, \ldots, Q_{i_{m_i}}^3\in{\cal{C}}$, and $\gamma_{ij}$ is a 
 cubical disk (or edge) joining the component $C_i^3$ with the component $C_j^3$.
Observe that if $\gamma_{ij}$ is the union of 2-faces of type $T_{\pm}$, hence $\gamma_{ij}\subset K^2_{n-}\cap K^2_{n+}$.
Moreover $K^2_{n-}\cap K^2_{n+}= \gamma_{ij}$ and  $\partial M_n^3=K^2_{n-}\cup K^2_{n+}$.  

\noi Since $\pi_2 (M_n^3)\cong\mathbb{Z}$, then  $C_i^3$  the homotopy type either the 2-sphere or the 3-ball. 
Suppose that $C_i^3$ has the homotopy type of the 2-sphere,
then by hypothesis $\partial C_i^3=\partial M_n^3=K^2_{n-}\cup K^2_{n+}$ contains a face $F^2$ of type $T_{\pm}$, but $F^2$ does not belong to
any cube $Q^3$ of $M_n^3$; so $F^2$ does not belong to $C_i^3$. This is a contradiction, hence  $C_i^3$ has the homotopy type of the 3-ball.  

\noi By the above, $\partial C_i^3$ is homeomorphic to $\mathbb{S}^2$ and consists of 2-faces of type $T_{-}$ and $T_{+}$. Moreover, $\partial C_i^3$ consists of two disks
$D_-^2$ and $D_+^2$, such that $D_-^2$ is the union of faces of type $T_{-}$ and $D_+^2$ is the union of faces of type $T_{+}$. Now we apply the
argument of the case 2, so $D_+^2$ is replaced by $D_-^2$. Since we have a finite number of components $C_i^3$,
the result follows. $\square$

\begin{LB} 
$\widehat{K^2_1}\overset{c}\sim  \widehat{K^2_2}$: There exists a finite sequence
of cubulated moves that carries $\widehat{K^2_1}$ into  $\widehat{K^2_2}$. In other words, $\widehat{K^2_1}$ is equivalent to  $\widehat{K^2_2}$ by cubulated moves. \end{LB}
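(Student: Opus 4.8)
The idea is to string together the pieces already assembled. By Theorem \ref{trace} we have $\widehat{J^3}\subset\mathbb{R}^5$ sliced by connected level sets of the projection $p$, with $p^{-1}(t)\cap\widehat{J^3}=\widehat{K^2_1}$ for all $t\le m_1$ and $p^{-1}(t)\cap\widehat{J^3}=\widehat{K^2_2}$ for all $t\ge m_2$. For each integer $n$ with $m_1\le n\le m_2$ the two cubical 2-knots $K^2_{n-}=p^{-1}(n-\tfrac12)\cap\widehat{J^3}$ and $K^2_{n+}=p^{-1}(n+\tfrac12)\cap\widehat{J^3}$ are, by Lemma \ref{fns}, related by a finite sequence of cubulated moves, $K^2_{n-}\overset{c}\sim K^2_{n+}$. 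Also, within a single slab $p^{-1}[n,n+1]$ the manifold $M^3_{n+}=K^2_{n+}\times[0,1]$ is a product, so $K^2_{n+}$ and $K^2_{(n+1)-}$ are literally the same cubical 2-knot (translated in the vertical direction, which is the identity combinatorially). First I would make this chain explicit:
\[
\widehat{K^2_1}=K^2_{m_1-}\overset{c}\sim K^2_{m_1+}=K^2_{(m_1+1)-}\overset{c}\sim\cdots\overset{c}\sim K^2_{m_2+}=\widehat{K^2_2}.
\]
Since $\overset{c}\sim$ is transitive (concatenation of finite sequences of cubulated moves) and $m_2-m_1$ is finite, this gives $\widehat{K^2_1}\overset{c}\sim\widehat{K^2_2}$.

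The steps in order are: (1) recall from the construction in Section \ref{cubic} that for every integer $n$ in the relevant range, $K^2_{n-}$ and $K^2_{n+}$ are cubical 2-knots and $M^3=M^3_{n-}\cup M^3_n\cup M^3_{n+}\cong\mathbb{S}^2\times[0,1]$; (2) invoke Lemma \ref{fns} to get $K^2_{n-}\overset{c}\sim K^2_{n+}$; (3) observe that $K^2_{n+}$ and $K^2_{(n+1)-}$ coincide as cubical 2-knots because $M^3_{n+}=K^2_{n+}\times[0,1]$ is a vertical product and the two slices sit at the two ends of this product — no move is needed to pass from one to the other, only a vertical translation of $\mathbb{R}^5$ which preserves the cubulation; (4) concatenate the finitely many sequences of cubulated moves obtained in steps (2) and (3), using transitivity of $\overset{c}\sim$; (5) identify the endpoints of the chain with $\widehat{K^2_1}$ and $\widehat{K^2_2}$ via the defining property of $\widehat{J^3}$ stated after Theorem \ref{trace}.

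The main obstacle — and really the only substantive point — is step (3): one must be certain that the two 2-knots sitting at heights $n+\tfrac12$ and $(n+1)-\tfrac12$ inside the product region $p^{-1}[n,n+1]\cap\widehat{J^3}=K^2_{n+}\times[0,1]$ are \emph{combinatorially} the same cubical 2-knot, not merely isotopic ones. This follows because $\widehat{J^3}$ is cubical and sliced by connected level sets, so in a slab between two consecutive integer hyperplanes (none of which is crossed) the 3-skeleton piece of $\widehat{J^3}$ is a union of vertical 3-cells, forcing the horizontal cross-section to be constant in combinatorial type throughout the slab; hence the two cross-sections differ only by the integer vertical translation, which is a symmetry of the canonical cubulation of $\mathbb{R}^5$ and therefore carries one cubical 2-knot onto the other without any cubulated move at all. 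Everything else is bookkeeping: the transitivity of $\overset{c}\sim$ is immediate from its definition, and the finiteness of the chain is guaranteed by $m_1,m_2\in\mathbb{Z}$.

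\begin{proof}
By Theorem \ref{trace}, $\widehat{J^3}$ is contained in the $3$-skeleton of the canonical cubulation of $\mathbb{R}^5$, is sliced by connected level sets of $p$, and there are integers $m_1\le m_2$ with $p^{-1}(t)\cap\widehat{J^3}=\widehat{K^2_1}$ for $t\le m_1$ and $p^{-1}(t)\cap\widehat{J^3}=\widehat{K^2_2}$ for $t\ge m_2$. For each integer $n$ with $m_1\le n\le m_2$, set as in Section \ref{cubic}
\[
K^2_{n-}=p^{-1}\!\left(n-\tfrac12\right)\cap\widehat{J^3},\qquad K^2_{n+}=p^{-1}\!\left(n+\tfrac12\right)\cap\widehat{J^3},
\]
which are cubical $2$-knots. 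By Lemma \ref{fns}, for each such $n$ there is a finite sequence of cubulated moves carrying $K^2_{n-}$ onto $K^2_{n+}$, i.e.\ $K^2_{n-}\overset{c}\sim K^2_{n+}$.

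Next, consider a slab $p^{-1}([n,n+1])$ with $m_1\le n\le n+1\le m_2$. Since $\widehat{J^3}$ lies in the $3$-skeleton of the canonical cubulation and no integer hyperplane is crossed in the open slab, the $3$-cells of $\widehat{J^3}$ meeting $p^{-1}((n,n+1))$ are vertical, so $\mathrm{Cl}\!\left(p^{-1}((n,n+1))\cap\widehat{J^3}\right)$ is a vertical cubical product $K^2_{n+}\times[0,1]$ (this is exactly the identification $M^3_{n+}=K^2_{n+}\times[0,1]$ used above). Consequently the cross-sections at heights $n+\tfrac12$ and $(n+1)-\tfrac12$ are related by the integer vertical translation of $\mathbb{R}^5$, which is a symmetry of the canonical cubulation; hence $K^2_{n+}$ and $K^2_{(n+1)-}$ are the same cubical $2$-knot, and in particular $K^2_{n+}\overset{c}\sim K^2_{(n+1)-}$ via the empty sequence of moves.

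Concatenating these finitely many sequences of cubulated moves and using that $\overset{c}\sim$ is transitive, we obtain
\[
\widehat{K^2_1}=K^2_{m_1-}\overset{c}\sim K^2_{m_1+}=K^2_{(m_1+1)-}\overset{c}\sim\cdots\overset{c}\sim K^2_{m_2-}\overset{c}\sim K^2_{m_2+}=\widehat{K^2_2}.
\]
Therefore $\widehat{K^2_1}\overset{c}\sim\widehat{K^2_2}$, as claimed. $\square$
\end{proof}
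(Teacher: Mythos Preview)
Your proof is correct and follows essentially the same approach as the paper's: apply Lemma \ref{fns} at each integer level between $m_1$ and $m_2$ and concatenate the resulting finite sequences of cubulated moves. One small simplification: your step (3) is easier than you make it, since $n+\tfrac12=(n+1)-\tfrac12$ means $K^2_{n+}$ and $K^2_{(n+1)-}$ are literally the same set by definition, so no vertical-translation argument is needed at all.
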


\noindent{\it Proof of Lemma B.}  Recall that there exist integer numbers 
$m_1$ and $m_2$ such that $p^{-1}(t)\cap \widehat{J^3} = \widehat{K^2_1}$ for all $t\leq m_1$ and
$p^{-1}(t)\cap \widehat{J^3} = \widehat{K^2_2}$ for all $t\geq m_2$. Consider the integer $m_1+1$. By 
Lemma \ref{fns} there exists a finite number of cubulated moves that
carries the 2-knot $\widehat{K^2_1}$ into the 2-knot $K^2_{(m_1+1)-}$. We continue inductively, and again by 
Lemma \ref{fns}
there exists a finite number of cubulated moves that
carries the knot $K^2_{(m_2-1)+}$ into the knot $\widehat{K^2_2}$. Since, we have a finite number of integers contained in the interval $[m_1,m_2]$, then
there exists a finite sequence of cubulated moves that carries $\widehat{K^2_1}$ into  $\widehat{K^2_2}$. $\square$

\noindent J. P. D\'iaz. {\tt Instituto de Matem\'aticas, Unidad Cuernavaca}. Universidad Nacional Au\-t\'o\-no\-ma de M\'exico.
Av. Universidad s/n, Col. Lomas de Chamilpa. Cuernavaca, Morelos, M\'exico, 62209.

\noindent {\it E-mail address:} juanpablo@matcuer.unam.mx
\vskip .3cm
\noindent G. Hinojosa. {\tt Centro de Investigaci\'on en Ciencias}. Instituto de Investigaci\'on en Ciencias B\'asicas y Aplicadas. Universidad Aut\'onoma del Estado de Morelos. Av. Universidad 1001, Col. Chamilpa.
Cuernavaca, Morelos, M\'exico, 62209. 

\noindent {\it E-mail address:} gabriela@uaem.mx 

\vskip .3cm
\noindent A. Verjovsky. {\tt Instituto de Matem\'aticas, Unidad Cuernavaca}. Universidad Nacional Au\-t\'o\-no\-ma de M\'exico.
Av. Universidad s/n, Col. Lomas de Chamilpa. Cuernavaca, Morelos, M\'exico, 62209.

\noindent {\it E-mail address:} alberto@matcuer.unam.mx


\begin{thebibliography}{99}
\bibitem{artin} E. Artin. \emph{Zur Isotopie zweidimensionalen
  Fl\"achen im $R_{4}$.} Abh. Math. Sem. Univ. Hamburg
(1926), 174--177.
\bibitem{BHV} M. Boege, G. Hinojosa,  and A. Verjovsky. 
\emph{Any smooth knot $\mathbb{S}^{n}\hookrightarrow\mathbb{R}^{n+2}$ is isotopic to a cubic knot contained in the canonical scaffolding of $\mathbb{R}^{n+2}$}. 
Rev. Mat Complutense (2011) 24: 1--13. DOI 10.1007/s13163-010-0037-4.
\bibitem{DHVV} J. P. D\'iaz,  G. Hinojosa, R. Valdez, A. Verjovsky. \emph{Smoothing closed gridded surfaces embedded in  $\R^4$}. http://arxiv.org/abs/1702.05467

\bibitem{CRS} J.S. Carter, J. Rieger, M. Saito, \emph{A combinatorial descriptions of knotted surfaces and their isotopies}, Adv. in Math 127 (1997), 1--51.
 
 \bibitem{CS} J.S. Carter, M. Saito M, \emph{Knotted surfaces and their diagrams}, Mathematical Surveys and Monographs, 55. American Mathematical Society, Providence, RI, 1998.


\bibitem{dolbilin} N. P. Dolbilin,  M. A. Shtan ko, M. I.  Shtogrin.
{\it Cubic manifolds in lattices.} Izv. Ross. Akad. Nauk Ser. Mat. 58 (1994), no. 2, 93­-107; translation in Russian Acad. Sci. Izv.
Math. 44 (1995), no. 2, 301­-313.
\bibitem{funar} L. Funar. \emph{Cubulations, immersions, mappability and a problem of Habegger.}  Ann.  scient. \'Ec. Norm. Sup.,
4e s\'erie, t. 32, 1999, pp. 681--700.
\bibitem{HVV} G. Hinojosa, A. Verjovsky, C. Verjovsky Marcotte. \emph{Cubulated  moves and discrete knots}. Journal of knot theory and its ramifications, Vol 22, No. 14 (2013) 1350079 (26 pages). DOI: 10.1142/S021821651350079X

\bibitem{K} S. Kamada, \emph{Surface-Knots in 4-Space. An Introduction}
Springer Monographs in Mathematics, Springer-Verlag 2017.



\bibitem{mazur} B. Mazur. \emph{The definition of equivalence of combinatorial imbeddings.} Publications
math\'ematiques de l'I.H.\'E.S., tome 3 (1959) p.5--17.

\bibitem{matveev} S. Matveev, M. Polyak. \emph{Finite-Type Invariants of Cubic Complexes}. Acta Applicandae Mathematicae 75, pp. 125--132, 2003.

\bibitem{Roseman} D. Roseman, \emph{Reidemeister-type moves of surfaces in four-dimensional space, Knot Thoery,}
Banach Center Publications, Volume 42, Institute of Mathematics, Polish Academy of Sciences,
Warszawa (1998), p. 347--380.

\bibitem{RN} Roice Nelson. \emph{https://plus.google.com/+RoiceNelson/posts}

\bibitem{pugh} Charles C. Pugh. \emph{Smoothing a topological manifold}. Topology and its applications 124 (2002) 487--503.
\bibitem{rokhlin} Vladimir A. Rokhlin, \emph{New results in the theory of four-dimensional manifolds}. Doklady Acad. Nauk. SSSR (N.S.) 84 (1952) 221--224.
\bibitem{rolfsen} D. Rolfsen. \emph{Knots and Links}. Publish or Perish, Inc. 1976.
\bibitem{whitehead} J. H. C. Whitehead. \emph{Manifolds with transverse fields in Euclidean space}. Annals of Mathematics vol. 73, no.1 (January, 1961) 154--212.
\bibitem{zivaljevic} \v{Z}ivaljevi\'c, Rade T. \emph{Combinatorial groupoids, cubical
complexes, and the Lov\'asz conjecture}.  Discrete Comput. Geom. 41 (2009), no. 1, 135--161.
\end{thebibliography}
\end{document}